\pgfplotsset{compat=1.5}
\newcommand{\vq}{\mathbf{q}}
\newcommand{\vw}{\mathbf{w}}
\newcommand{\vv}{\mathbf{v}} 
\newcommand{\vz}{\mathbf{z}} 
\newcommand{\vx}{\mathbf{x}} 
\newcommand{\vpi}{\bm{\pi}} 
\renewcommand{\Omega}{\varOmega}
\newcommand{\R}{\mathbbm{R}}
\newcommand{\E}{\mathbbm{E}}
\newcommand{\vtP}{\mathbf{\tilde{P}}}
\newcommand{\vtc}{\mathbf{\tilde{c}}}
\newcommand{\cvar}{\operatorname{CVaR}}
\newcommand{\var}{\operatorname{VaR}}
\definecolor{newgreen}{rgb}{0.0, 0.5, 0.0}
\definecolor{bblue}{HTML}{4F81BD}
\definecolor{rred}{HTML}{C0504D}
\definecolor{ggreen}{HTML}{9BBB59}
\definecolor{ppurple}{HTML}{9F4C7C}
\newcounter{commentcounter}
\long\def\symbolfootnote[#1]#2{\begingroup%
\def\thefootnote{\fnsymbol{footnote}}\footnote[#1]{#2}\endgroup}
\newcommand{\ignore}[1]{}
\DeclareMathOperator*{\argmax}{arg\,max}
\newcommand{\revised}[1]{\textcolor{black}{#1}}
\newcommand{\red}[1]{\textcolor{red}{#1}}
\newcommand{\blue}[1]{\textcolor{blue}{#1}}
\title{\revised{Risk Aversion to Parameter Uncertainty in Markov Decision Processes} with an Application to Slow-Onset Disaster Relief}
\keywords{Markov decision processes, parameter uncertainty, value-at-risk, chance constraints, humanitarian supply chains, disaster relief}
\begin{document}

\maketitle \centerline{\today}

\begin{abstract}
In classical Markov Decision Processes (MDPs), action costs and transition probabilities are assumed to be known, although an accurate estimation of these parameters is often not possible in practice. This study addresses MDPs under cost and transition probability uncertainty and aims to provide a mathematical framework to obtain policies minimizing the risk of high long-term losses due to not knowing the true system parameters. To this end, we utilize the risk measure value-at-risk associated with the expected performance of an MDP model with respect to parameter uncertainty. We provide mixed-integer linear and nonlinear programming formulations and heuristic algorithms for such risk-averse \revised{models of} MDPs under a finite distribution of the uncertain parameters. Our proposed models and solution methods are illustrated on an inventory management problem for humanitarian relief operations during a slow-onset disaster. The results demonstrate the potential of our risk-averse modeling approach for reducing the risk of highly undesirable outcomes in uncertain/risky environments.

\end{abstract}

\section{Introduction}

Markov Decision Processes (MDPs) are effectively used in many applications of sequential decision making in uncertain environments including inventory management, manufacturing, robotics, communication systems, and healthcare, e.g., \cite{puterman2014markov,altman1999constrained,boucherie2017markov}. In an MDP model, the decision makers take an action at specified points in time considering the current state of the system with the aim of minimizing their expected loss (resp., maximizing their expected utility), and depending on the action taken, the system transitions to another state. The evolution of the underlying process is mainly characterized by the action costs (resp., rewards) and transition probabilities between the system states, inducing two types of uncertainty. The \emph{internal uncertainty} stems from the probabilistic behaviour of transitions between states, costs and actions (see, e.g., \citealp{ruszczynski2010risk,bauerle2011markov,xu2011probabilistic,fan2018risk} for studies addressing the risk arising from internal uncertainty). The \emph{parameter uncertainty}, on the other hand, is due to the ambiguities in the parameters representing the costs and transition probabilities.
In classical MDPs, these parameters are assumed to be known; they are usually estimated from historical data or learned from previous experiences. However, in practice, it is usually not possible to obtain a single estimate that fully captures the nature of the uncertainties. The actual performance of the system may significantly differ from the anticipated performance of the MDP model due to the inherent variation in the parameters \citep{mannor2007bias}. Our focus in this study is on the parameter uncertainty in MDPs---decision makers are assumed to be sensitive to the risk associated with the fluctuations in parameters while being risk-neutral to the internal randomness due to state transitions, costs and randomized actions. This setting is especially suitable for applications in which the objective function is aggregated over a number of problem instances, e.g., total inventory cost over various types of supply items.
In such cases, aggregation across multiple instances mitigates the variation due to internal uncertainty, and parameter uncertainty becomes the main source of variation.

A widely used approach to incorporate parameter uncertainty into MDPs is robust optimization.
In the robust modeling framework, the objective is to optimize the worst-case performance over all possible realizations in a given uncertainty set. This approach is appealing in the sense that it requires no prior information on the distribution of costs or transition probabilities and it gives rise to computationally efficient solution algorithms. However, it often leads to conservative results because the focus is on the worst-case system performance, which may be rarely encountered in practice. In the initial studies on robust MDPs, uncertainty is usually described using a polyhedral set, because it leads to tractable solution algorithms \citep{satia1973markovian,white1994markov,givan1997bounded,tewari2007bounded,bagnell2001solving}. The uncertainty sets are later extended for more general definitions with the aim of balancing the conservatism of the solutions and the tractability of the solution algorithms. \citet{nilim2005robust} model the uncertainty in transition probabilities using a set of stochastic matrices satisfying \textit{rectangularity} property, i.e., when there are no correlations between transition probabilities for different states and actions. The authors devise an efficient dynamic programming algorithm for this case. Similarly, \citet{iyengar2005robust} studies robust MDPs under transition probability uncertainty with rectangularity assumption and provides robust value and policy iteration algorithms for \textit{finite-horizon} nonstationary and \textit{infinite-horizon} stationary problem settings. \citet{sinha2016policy} propose a policy iteration algorithm for robust infinite-horizon nonstationary MDPs following the rectangularity assumption. \citet{wiesemann2013robust} relax the rectangularity assumption of \cite{nilim2005robust} and consider a more general class of uncertainty sets in which the assumption of no correlation between transition probabilities is only made for states, not for actions. \citet{mannor2016robust} define a tractable subclass of nonrectangular uncertainty sets, namely $k$-rectangular uncertainty sets, such that the number of possible conditional projections of the uncertainty set is at most $k$. Another alternative modeling approach to balance the conservatism of the solutions is to consider the distributional robustness, where the uncertain parameters are assumed to follow the worst-case distribution from a set of possible distributions described by some general properties such as expectations or moments. Unlike robust MDPs, distributionally robust MDPs incorporate the available---but incomplete---information on the a priori distribution of the uncertain parameters. For relavant studies on distributionally robust MDPs, we refer the reader to \citet{xu2012dist,yu2016distributionally}.

Bayesian approaches to address parameter uncertainty have been receiving increasing attention in the recent literature. This uncertainty model treats unknown parameters as random variables with corresponding probability distributions. Hence, it provides the means to incorporate complete distributional information about the unknown parameters and the attitude of decision makers towards uncertainty and risk, oftentimes at the cost of increasing problem complexity. \citet{steimle2018multimodel} consider a multi-model MDP, where the aim is to find a policy maximizing the weighted sum of expected total rewards over a finite horizon associated with different sets of parameters obtained by different estimation methods. This modeling framework is analogous to an expected value problem considering a finite number of scenarios in the context of stochastic programming, because each set of parameters can be treated as a scenario in which the corresponding scenario probability is set as the normalized weight value. The authors prove the existence of a deterministic optimal policy, show that the problem is NP-hard, and provide a mixed-integer linear programming (MILP) formulation and a heuristic algorithm. In a subsequent work, \citet{steimle2018decomposition} propose a customized branch-and-bound algorithm to solve the multi-model MDPs. \citet{buchholz2018computation} study multi-model MDPs, where each model corresponds to an infinite-horizon MDP. Unlike the finite-horizon case, it is demonstrated that infinite-horizon variant may not have a deterministic optimal policy. The authors propose two nonlinear programming formulations and a heuristic algorithm for the case where randomized policies are allowed, and an MILP model utilizing the dual linear programming formulation of MDPs for the deterministic policy case. 

A potential drawback of previously mentioned modeling frameworks for multi-model MDPs is that the expected value objective ignores the risk arising from the parameter uncertainty. Considering this issue, \citet{xu2009parametric} address reward uncertainty in MDPs with respect to parametric regret. The MDP of interest is a finite-horizon, discounted model with finite state and action spaces. The authors consider two different objectives: minimax regret based on the robust approach and mean-variance trade-off of the regret based on the Bayesian approach. They propose a nonconvex quadratic program for the former objective and a convex quadratic program for the latter. \citet{delage2010percentile} consider reward and transition probability uncertainty separately and propose a chance-constrained model in the form of percentile optimization, which corresponds to the risk measure, \textit{value-at-risk} ($\var$). The authors give a formulation for infinite-horizon MDPs with finite state and action spaces, and stationary policies. They show that the problem is intractable in the general case but can be efficiently solved when the rewards follow a Gaussian distribution or transition probabilities are modeled using independent Dirichlet priors \revised{satisfying rectangularity property}. Alternatively, \citet{chen2012tractable} investigate a class of percentile-based objective functions that are easy to approximate for any probability distribution and \citet{adulyasak2015solving} focus on finding objective functions that are separable over realizations of uncertain parameters under a sampling framework. Our modeling approach for incorporating parameter uncertainty into MDPs is along the same line with \citet{delage2010percentile}, however we allow general probability distributions with finite support. Throughout the manuscript, we use the terms percentile, quantile and $\var$ interchangeably.


In this study, we  study MDPs under cost and transition probability uncertainty. Our aim is to obtain a stationary policy that optimizes the quantile function value, $\var_\alpha$, at a certain confidence level $\alpha$ with respect to parameter uncertainty. 
The $\var_\alpha$ assesses an estimate of the largest potential loss excluding the worst outcomes with at most $1-\alpha$ probability.
Conservatism of solutions in optimization problems involving $\var_\alpha$ can be adjusted using different confidence levels $\alpha$, reflecting decision makers' risk aversion---quantile optimization is equivalent to a robust optimization approach when $\alpha=1$.
In addition, quantile-based performance measures, such as $\var$, are used in many applications in the service industry because of their clear interpretation and correspondence with the service-level requirements, e.g., minimum investment to guarantee 100$\alpha$\% service level \citep{decandia2007dynamo,benoit2009benefits,atakan2017minimizing}. Although $\var$ is nonconvex in general, the main challenge in our case is the combinatorial nature of policies independent from the choice of the risk measure, which makes the problem NP-hard even for the expected value objective \citep{steimle2018multimodel}.
Unlike \citet{delage2010percentile}, who also consider the $\var$ objective \revised{for cost and transition probability uncertainty cases separately under certain rectangularity assumptions}, we assume that action costs and transition probabilities are both uncertain and follow a finite joint distribution \revised{without imposing any independence requirements on system parameters}. This approach directly represents the cases in which a finite set of possible parameter realizations can be obtained based on historical data and/or multiple estimation tools \citep{bertsimas2016robust,steimle2018multimodel}. Moreover, it provides a general framework that can be used on sample approximations of a wide range of probability distributions.
Finite representation of uncertainty also facilitates competitive solution methods for optimization problems incorporating $\var$. Since $\var$ corresponds to a quantile function, optimization problems incorporating $\var$ can be formulated as chance-constrained programs (CCPs), which are known to be NP-hard in general and usually require multidimensional integration. When uncertain parameters follow a finite distribution, the challenges of working with multivariate distributions can be circumvented, and the CCP formulations can be stated as mixed-integer programming (MIP) models by employing big-$M$ inequalities and additional binary variables for each possible realization of uncertainty \citep{luedtke2008sample} (see, e.g., \citealp{kuccukyavuz2012mixing,liu2017intersection,zhao2017polyhedral} for further strengthenings). Majority of the studies on CCPs considers a single-state (i.e., static) decision-making framework, where the uncertainty is revealed only after all required decisions are made. \citet{luedtke2014branch} and \citet{liu2016decomposition} extend the literature for a two-stage decision-making framework such that recourse decisions are allowed in the second stage and provide branch-and-cut algorithms employing mixing inequalities to ensure feasibility/optimality of the second-stage problems. Along the same lines, \citet{zhang2014branch} consider a multi-stage (finite-horizon) setting and propose a branch-and-cut algorithm using continuous mixing inequalities. For an overview of CCPs and related approaches, we refer the reader to \cite{kuccukyavuz2017introduction}.
Motivated by these advances in the CCP literature, \citet{feng2015practical} formulate the $\var$ portfolio optimization problem as a single-stage CCP and provide an MILP reformulation and a branch-and-cut algorithm utilizing the mixing inequalities of \cite{luedtke2014branch}. Their results suggest superiority of the MIP approaches over branch-and-cut based algorithms and emphasize the significance of the big-$M$ terms on computational performance of the MIP formulations for $\var$ optimization. Based on these conclusions, \citet{pavlikov2018optimization} provide a bounding scheme that produces tighter big-$M$ values for the MILP formulation. 

Optimizing $\var$ associated with parameter uncertainty in MDPs, on the other hand, brings out additional challenges due to the combinatorial nature of the decisions and the underlying Markovian system dynamics in an MDP. In this problem setting, the aim is to obtain a single optimal policy (selected at the beginning) minimizing the $\var$ associated with parameter uncertainty in an MDP, which is to be implemented over the entire planning horizon under any realization of uncertainty. Different from the previously mentioned studies on CCPs, the underlying process is assumed to be Markovian and possible actions in each state belong to a finite set. For this purpose, we provide a two-stage CCP formulation capable of modeling the dynamics of a Markov chain for any selected policy considering possible values of uncertain parameters. Note that here the second stage represents the performance of the MDP for a given policy. We additionally propose relaxations and heuristic solution algorithms that can be used for obtaining lower and upper bounds on the optimal objective function value. Although we focus on infinite-horizon MDPs, our results and algorithms can be easily extended for finite-horizon MDPs after small adjustments.

We test our modeling framework and solution algorithms on a humanitarian inventory management problem for relief items required to sustain basic needs of a population affected by a slow-onset disaster, e.g., war, political insurgence, extreme poverty, famine, or drought. Since the progress and impact of a slow-onset disaster generally depend on unpredictable political and/or natural events, the demand for the relief items is highly variable. The supply amounts are also exposed to uncertainty as they mainly rely on voluntary donations. Another critical issue in humanitarian inventory management is the perishability of many relief items such as food and medication. At the beginning of each time period, based on the current inventory level, the decision makers need to determine an additional order quantity to minimize the expected total inventory holding, stock-out and disposal costs considering the expiration dates and the uncertainty in supply and demand. This problem can be modeled as an MDP, where the current inventory level represents the state of the system, and the uncertainty in supply, demand, and inventory is captured by the transition probabilities between different states \citep{ferreira2018inventory}. However, the cost and transition probability parameters of the MDP model are subject to high level of uncertainty because demand and supply rates and shelf life of perishable relief items used in the estimation of these parameters may widely fluctuate.  
The $\var_\alpha$ objective in this setting has a natural interpretation: to find a replenishment strategy that minimizes the budget required to cover the expected total costs considering all possible parameter realizations with at least $\alpha$ probability. 

The rest of this paper is organized as follows. In Section \ref{sec:modelForm}, we formulate the MDP problem \revised{minimizing $\alpha$-quantile} under cost and transition probability uncertainty, provide MIP models and explore characteristics of the optimal policies. Section \ref{sec:ImpDet} presents preprocessing procedures, which can be used for initializing auxiliary parameters of the proposed mathematical models as well as reducing the problem size, and a heuristic solution algorithm. We describe a stochastic inventory management problem for slow-onset disasters in Section \ref{sec:appInv}, which is later used for demonstrating effectiveness of the quantile-optimizing modeling approach and proposed solution methods in Section \ref{sec:compStudy}. \revised{We summarize our contributions and future research directions in Section \ref{sec:Conc}.}

\section{Problem Formulation and Structural Properties} \label{sec:modelForm}

Consider a discrete-time infinite-horizon MDP model with finite state space $\mathcal{H}$ and finite action space $\mathcal{A}$. We define $\tilde{c}_i(a)$ as the immediate expected cost of taking action $a \in \mathcal{A}$ in state $i \in \mathcal{H}$ and $\tilde{P}_{ij}(a)$ as the probability of transitioning from state $i\in \mathcal{H}$ to state $j\in \mathcal{H}$ under action $a \in \mathcal{A}$. The future costs are discounted by $\gamma \in [0,1)$ and the distribution of the initial state is given as $|\mathcal{H}|$-dimensional vector $\vq$. A stationary policy $\vpi=(\pi_1,\dots,\pi_{|\mathcal{H}|})$ refers to a sequence of decision rules $\pi_i$ describing the action strategy for each state $i \in \mathcal{H}$. When the policy is \textit{randomized}, each element of $|\mathcal{A}|$-dimensional vector $\pi_i$ denotes the probability of taking the respective action at each time state $i \in \mathcal{H}$ is encountered. For a \textit{deterministic} policy $\vpi$, on the other hand, $\pi_i$ refers to a unit vector in which only the element corresponding to the action selected in state $i \in \mathcal{H}$ is one. Assuming that the cost and transition probability parameters are nonnegative, stationary and bounded, the expected total discounted cost of the underlying Markov chain for a given policy $\vpi$ and known system parameters $(\vtc,\vtP)$ can be stated as 
\begin{equation*}
    C(\vpi,\vtc,\vtP)=\E_{\vx \in \mathcal{H}}\left(\sum_{t=0}^\infty \gamma^t \tilde{c}_{x_t}(\pi_{x_t})|x_0 \propto \vq,\vpi \right),
\end{equation*}
where $x_0$ and $x_t$ denote the initial state and the state of the system at decision epoch $t>0$, respectively. \revised{The term $\tilde{c}_{x_t}(\pi_{x_t})$ corresponds to the immediate expected cost of taking action $\pi_{x_t}$ in state $x_t$ at time period $t=0,1,\dots$. The expectation is taken based on the given policy $\vpi$ and the initial state $x_0$ following probability distribution $\vq$.} Let $\Pi$ be the set of stationary Markov policies. In an MDP model, the aim is to find a policy $\vpi \in \Pi$ minimizing the expected total discounted cost, i.e.,
\begin{equation}\label{form:cmdp}
    \min\limits_{\vpi \in \Pi}\quad C(\vpi,\vtc,\vtP).
\end{equation}
This problem is known to have a stationary and deterministic optimal solution over the set of all policies, and it can be solved efficiently using several well-known methods such as the value iteration algorithm, the policy iteration algorithm, and linear programming \citep{puterman2014markov}. Using the Bellman equation \citep{bellman2013dynamic},
problem \eqref{form:cmdp} can be alternatively stated as
\begin{subequations}\label{form:mdplp}
\begin{align}
 \min\limits_{\vpi \in \Pi,\ \vv \in \R^{|\mathcal{H}|}}~&  \sum_{i \in \mathcal{H}} q_i v_i  \\
\text{s.t.}~& v_i = \tilde{c}_{i}(\pi_{i}) + \gamma \sum_{j \in \mathcal{H}} \tilde{P}_{ij}(\pi_i) v_j, \quad i \in \mathcal{H},  \label{BellmanEq}
\end{align}
\end{subequations}
where $v_i$ is the expected sum of discounted costs under the selected policy $\vpi$ when starting from state $i \in \mathcal{H}$, which satisfies Bellman optimality condition
\begin{equation} \label{BellmanOpt}
    v_i = \min\limits_{a\in\mathcal{A}} \left\{\tilde{c}_{i}(a) + \gamma \sum_{j \in \mathcal{H}} \tilde{P}_{ij}(a) v_j\right\}, \quad i \in \mathcal{H}.
\end{equation}

The cost vector $\vtc$ and transition probability matrix $\vtP$ in MDP model \eqref{form:cmdp} are assumed to be known. However, in practice, it is usually difficult to predict exact values of these parameters. For example, the prices or  the weekly demand rate for relief items during a slow-onset disaster (e.g., war) are subject to a high level of uncertainty due to various factors. In addition, for cases where rare events may have a tremendous impact---as in the case of disaster management---the decision makers may prefer to incorporate their aversion towards risk into the decision support tools. Motivated by these arguments, in this study, we consider the setting where the elements of the cost vector $\vtc$ and transition probability matrix $\vtP$ are assumed to be random variables instead of known parameters. The decision makers need to determine a policy $\vpi \in \Pi$ in this uncertain environment with the aim of minimizing their risk of realizing a large amount of expected total discounted cost \revised{with respect to the uncertainty in parameters}. In terms of stochastic programming terminology, policy decisions can be thought of as \textit{non-anticipative}, that is, a policy minimizing the risk is selected at the beginning in the presence of uncertainty, and it will be implemented throughout the planning horizon for any realization of parameters $\vtc$ and $\vtP$, the values of which are not revealed. We seek a policy $\vpi \in \Pi$ minimizing the $\alpha$-quantile of the expected total discounted cost with respect to parameter uncertainty, which corresponds to the optimal policy of the \revised{quantile-minimizing} MDP problem 
\begin{subequations}
\begin{align*}
\textup{\revised{(QMDP)}}\quad\min\limits_{y\in \R,\ \vpi \in \Pi}\hspace{0.25cm} & y \\
s.t.\ \ & \mathbb{P}_{\vtc,\vtP}\left(  C(\vpi,\vtc,\vtP) \leq y \right) \geq \alpha. 
\end{align*}
\end{subequations}
Formulation QMDP ensures that the expected total discounted cost for the optimal policy $\vpi^*$ is less than or equal to the optimal objective function value $y^*$ with probability at least $\alpha$ under the distributions of $\vtc$ and $\vtP$. Note that the optimal value $y^*$ corresponds to the $\var$ of the expected total discounted cost for the optimal policy $\vpi^*$ at confidence level $\alpha$. Such formulations optimizing $\var$ ($\alpha$-quantile) are also referred to as quantile or percentile optimization in the literature. \citet{delage2010percentile} consider special cases of the quantile optimization problem QMDP in which cost and transition probability uncertainty are treated separately. The authors assume that the cost parameters follow a Gaussian distribution in the former case, and the transition probabilities in the latter are modeled using Dirichlet priors. Different than their model, we consider the cost and transition probability uncertainty simultaneously without making any assumptions on the distributions of uncertain parameters other than that it can be represented/approximated with a finite discrete distribution.

Using the nominal MDP model in \eqref{form:mdplp}, we obtain an alternative CCP formulation for problem QMDP,
\vspace{-1.0cm}\begin{subequations}\label{form:mdp-sce}
\begin{align}
\min\limits_{y\in \R,\ \vpi \in \Pi,\ \vv \in \R^{|\mathcal{H}|}}\hspace{0.25cm} & y \\
s.t.\ \ & \mathbb{P}_{\vtc,\vtP}\left(\sum_{i \in \mathcal{H}} q_i v_i^{(\vtc,\vtP)} \leq y\right)   \geq \alpha. \label{c3}\\
 & v_i^{(\vtc,\vtP)} = \tilde{c}_{i}(\pi_{i}) + \gamma \sum_{j \in \mathcal{H}} \tilde{P}_{ij}(\pi_i) v_j^{(\vtc,\vtP)} &\ i \in \mathcal{H}.\label{cc4c}
\end{align}
\end{subequations}
\revised{The $|\mathcal{H}|$-dimensional random vector $\vv^{(\vtc,\vtP)}$ is composed of random variables $v_i^{(\vtc,\vtP)}$, whose values depend on the joint realization of random parameters $\vtc$ and $\vtP$ and the selected policy $\vpi$}. In general, such problems with chance constraints are highly challenging to solve since they require computation of the joint distribution function, which usually involves numerical integration in multidimensional spaces
\citep{deak1988multidimensional}.
On the other hand, using a discrete representation of the distribution function obtained by a sampling method significantly reduces the computational complexity and provides reliable approximations to CCPs for a sufficiently large sample size \citep{calafiore2006scenario,luedtke2008sample}. In stochastic optimization, each sample of parameters is referred to as a \emph{scenario}. Note that this approach yields optimal solutions for multi-model MDPs in which parameter uncertainty can be finitely discretized. For example in medical applications for designing optimal treatment and screening protocols, the system state usually represents patient health status and transition probabilities between states can be computed using multiple tools from the clinical literature, which often produce different sets of parameters \citep{steimle2018multimodel}. In this context, the parameters computed using each tool can be treated as a scenario. Similarly, for the humanitarian inventory management problem, each scenario may correspond to a set of parameters computed using a different demand/supply forecasting method. In this case, the $\var$ objective can be interpreted as minimizing the worst-case expected total cost over $\alpha$ fraction of the possible MDP parameters (scenarios).

Another challenge in solving problem \eqref{form:mdp-sce} is the possibility that none of the optimal policies is deterministic. 
\revised{In such cases, finding an optimal policy requires searching over the larger set of randomized policies and the corresponding mathematical models may involve additional nonlinearities. Despite these computational challenges, randomized policies are successfully implemented in various applications of constrained MDPs, such as power management \citep{benini1999policy}, wireless transmission systems \citep{djonin2007mimo}, infrastructure maintenance and rehabilitation \citep{smilowitz2000optimal} and queuing systems with reservation \citep{feinberg1994optimality}.}
As mentioned before, for unconstrained infinite-horizon discounted MDPs with discrete state space, finite action space and known parameters, \revised{an optimal policy (deterministic or randomized) can be efficiently obtained by solving linear programs and} it is ensured that there always exists a deterministic stationary optimal policy. However, this is not necessarily true for MDPs under parameter uncertainty or additional constraints. Even for the expected value objective, the infinite-horizon problem may not have any deterministic optimal policy \citep{buchholz2018computation}, while the existence of a deterministic optimal policy is guaranteed for the finite-horizon problem \citep{steimle2018multimodel}. The problem QMDP, on the other hand, does not necessarily have a deterministic optimal policy for either infinite or finite-horizon cases. The following example demonstrates this observation for the infinite-horizon case, and it can be easily adjusted for a finite-horizon problem with a single period and zero termination costs. 

\begin{example}\label{ex1}
Consider a single-state infinite-horizon MDP with two actions, $a$ and $b$, under two scenarios with equal probabilities. Under scenario 1, $\tilde{c}(a)=0$ and $\tilde{c}(b)=2$, and under scenario 2, $\tilde{c}(a)=2$ and $\tilde{c}(b)=0$. In case a stationary deterministic policy is applied, i.e., either action $a$ or $b$ is chosen, the optimal objective function value of the problem at confidence level $\alpha=0.9$ is $2/(1-\gamma)$. However, if a randomized policy of selecting action $a$ or $b$ with equal probabilities is applied, then the optimal objective function value is $1/(1-\gamma)$, proving that no stationary deterministic policy is optimal.
\end{example}


For classical MDPs with known parameters, the Bellman optimality condition \eqref{BellmanOpt} leads to a linear programming formulation, where policy decisions are implied by the constraints that are satisfied as strict equalities. However, the quantile optimization problem QMDP cannot utilize such implicit representation of the policies because Bellman optimality condition does not necessarily hold when the same policy should be imposed across all scenarios due to \revised{non-anticipativity}. Even though the Bellman optimality condition is no longer valid, value functions in each scenario still need to obey the Bellman equation \eqref{BellmanEq} to correctly represent the dynamics of the underlying Markov chain for any given policy. Using this property, we propose a mixed-integer nonlinear programming (MINLP) formulation for problem \eqref{form:mdp-sce} assuming the existence of a discrete representation for parameter uncertainty as in the following statement:
\begin{itemize}
    \item[A1.] Joint distribution of $\vtc$ and $\vtP$ is represented as a finite set of scenarios $\mathcal{S}=\{1,\ldots,n\}$ with corresponding probabilities $p^1,\ldots,p^n$.
\end{itemize}
Note that our mathematical models and solution algorithms can be easily adjusted for the finite-horizon MDPs with nonstationary/stationary policies by introducing a time dimension on the decisions and/or parameters. Here, we focus on the infinite-horizon case, in which the stationarity assumption is desired for practicality and tractability, and ignore the time indices for brevity. 

\begin{proposition}\label{prop:mipform} 
Assuming a finite representation of parameter uncertainty as described in \textup{A1}, problem \eqref{form:mdp-sce} can be restated as the following deterministic equivalent formulation,
\begin{subequations}\label{form:mdp-mip-r}
\begin{align}
\textup{(QMDP-R)}\quad\min\hspace{0.25cm} & y \label{obj1}\\
s.t.\ \ & \sum_{a \in \mathcal{A}} w_{ia} = 1, \quad i \in \mathcal{H},\label{ccc1}\\
&\sum_{s \in \mathcal{S}} z^s p^s \geq \alpha,\label{ccc2}\\
& \sum_{i \in \mathcal{H}} q_i v_i^s \leq y + (1-z^s) M,\quad s \in \mathcal{S}, \label{ccc3} \\
& v_i^s \geq \sum_{a \in \mathcal{A}} \tilde{c}^s_i(a)  w_{ia} + \gamma \sum_{a \in \mathcal{A}} \sum_{j \in \mathcal{H}} \tilde{P}_{ij}^s(a) v_j^s  w_{ia}, \quad i \in \mathcal{H},\ s \in \mathcal{S}, \label{c8} \\
& z^s \in \{0,1\},\quad s \in \mathcal{S}, \label{c4}\\
& w_{ia} \in [0,1], \quad i \in \mathcal{H},\ a \in \mathcal{A},\label{ccc4-r}
\end{align}
\end{subequations}
where $M$ represents a large number such that constraint \eqref{ccc3} for $s \in \mathcal{S}$ is redundant if $z^s=0$.
\end{proposition}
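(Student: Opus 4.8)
The plan is to convert the chance-constrained program \eqref{form:mdp-sce} into \eqref{form:mdp-mip-r} in three moves: (i) encode the randomized stationary policy $\vpi$ through the action-selection variables $w_{ia}$; (ii) apply the standard finite-scenario big-$M$ reformulation of the chance constraint \eqref{c3}; and (iii) relax the Bellman equalities \eqref{cc4c} to the inequalities \eqref{c8} and prove that this relaxation is exact. The first two steps are essentially bookkeeping, while the third is where the real work lies.

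First I would represent a stationary randomized policy by letting $w_{ia}$ be the probability that $\pi_i$ assigns to action $a$, which produces the simplex constraints \eqref{ccc1} and \eqref{ccc4-r} (a deterministic policy being the special case of $0/1$ values). Since the immediate expected cost and the transition kernel are both linear in the action distribution, for each scenario $s$ we have $\tilde{c}_i(\pi_i)=\sum_{a}\tilde{c}_i^s(a)w_{ia}$ and $\tilde{P}_{ij}(\pi_i)=\sum_{a}\tilde{P}_{ij}^s(a)w_{ia}$. Substituting these into \eqref{cc4c} reproduces exactly the right-hand side of \eqref{c8} written as an equality; moreover $\sum_a w_{ia}=1$ guarantees that $P_{ij}^s(w):=\sum_a \tilde{P}_{ij}^s(a)w_{ia}$ defines a nonnegative stochastic matrix in every scenario, a fact I will need in step (iii).

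Next I would handle the chance constraint under Assumption A1. With the uncertainty supported on the finite set $\mathcal{S}$, I introduce a binary indicator $z^s$ that is set to $1$ only if scenario $s$ is required to meet $\sum_i q_i v_i^s \le y$; the big-$M$ inequality \eqref{ccc3} enforces this when $z^s=1$ and is redundant when $z^s=0$. A finite $M$ suffices, since with bounded costs and $\gamma\in[0,1)$ each $v_i^s$ is bounded above (by roughly $\tilde{c}_{\max}/(1-\gamma)$), so $\sum_i q_i v_i^s - y$ admits a finite upper bound at any candidate optimum. The knapsack-type constraint \eqref{ccc2} then forces the covered scenarios to carry probability mass at least $\alpha$, giving $\mathbb{P}_{\vtc,\vtP}\big(\sum_i q_i v_i^s \le y\big)\ge \sum_{s} z^s p^s \ge \alpha$ and thus reproducing \eqref{c3}. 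This is the textbook sample-based reformulation of a CCP, so I expect this step to be routine.

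The crux, and the main obstacle, is the exactness of replacing the Bellman equality \eqref{cc4c} by the inequality \eqref{c8}. Fix any policy $w$ feasible for the simplex constraints \eqref{ccc1} and \eqref{ccc4-r}. For each scenario the matrix $I-\gamma P^s(w)$ is invertible with nonnegative inverse $\sum_{k=0}^{\infty}\gamma^k (P^s(w))^k$, because $\gamma\in[0,1)$ and $P^s(w)$ is a nonnegative stochastic matrix (hence the Neumann series converges and is entrywise nonnegative). Writing \eqref{c8} in vector form as $(I-\gamma P^s(w))\vv^s \ge \vc^s(w)$ and left-multiplying by this nonnegative inverse preserves the componentwise inequality, so every feasible $\vv^s$ dominates the unique Bellman solution $\bar{\vv}^s=(I-\gamma P^s(w))^{-1}\vc^s(w)$ entrywise. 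Since $\vq\ge 0$, it follows that $\sum_i q_i v_i^s \ge \sum_i q_i \bar{v}_i^s$, so replacing any feasible $\vv^s$ by $\bar{\vv}^s$ can only decrease the left-hand side of \eqref{ccc3} while keeping the solution feasible at the same objective $y$. Conversely, the Bellman values $\bar{\vv}^s$ satisfy \eqref{c8} with equality and are therefore feasible. Hence the inequality-relaxed problem has the same optimal value as the equality-constrained one, and combining the three steps establishes that \eqref{form:mdp-mip-r} is a deterministic equivalent of \eqref{form:mdp-sce}.
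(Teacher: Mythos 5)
Your proof is correct, and it follows the same overall decomposition as the paper's: simplex constraints \eqref{ccc1} and \eqref{ccc4-r} to encode the randomized stationary policy, the standard finite-scenario big-$M$ reformulation of the chance constraint via \eqref{ccc2}--\eqref{ccc3} and \eqref{c4}, and the treatment of the per-scenario Bellman conditions. The genuine difference is in the third step: the paper simply \emph{asserts} that constraints \eqref{ccc1} and \eqref{ccc4-r} make the inequality \eqref{c8} ``equivalent to'' the Bellman equations \eqref{cc4c}, whereas you actually prove that this inequality relaxation is exact --- namely, that $I-\gamma P^s(w)$ has a nonnegative inverse given by the Neumann series, so every $\vv^s$ feasible for \eqref{c8} dominates the unique Bellman solution entrywise, and since $\vq\ge 0$ one may always replace $\vv^s$ by the Bellman values without increasing the left-hand side of \eqref{ccc3} or the objective. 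This monotonicity argument is the mathematically substantive content that the paper's proof leaves implicit (an equality and an inequality constraint are not literally equivalent; what holds is equality of optimal values), so your write-up is strictly more rigorous at the one place where rigor matters. One small imprecision: you say each $v_i^s$ ``is bounded above (by roughly $\tilde{c}_{\max}/(1-\gamma)$),'' but the constraints of QMDP-R only bound $\vv^s$ from below; the correct statement, which your own step (iii) furnishes, is that one may without loss of optimality take $\vv^s$ equal to the Bellman values, which \emph{are} so bounded, and then a finite $M$ rendering \eqref{ccc3} redundant for those values exists. With that rephrasing your argument is complete.
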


\begin{proof}
Constraint \eqref{ccc1} combined with the domain constraint \eqref{ccc4-r} ensures a probability distribution on the action space for each state, which is effective for all stages. Hence, $w_{ia}$ corresponds to the probability that action $a$ is taken in state $i$ under the optimal policy. Note that the variables $\vw$ are required for enforcing a single stationary policy across all scenarios. Constraints \eqref{ccc1} and \eqref{ccc4-r} also guarantee that constraint \eqref{c8} is equivalent to the Bellman equations in \eqref{cc4c} for the policy determined by $\vw$, and variable $v_i^s$ corresponds to the value of $v_i^{(\vtc,\vtP)}$ in \eqref{form:mdp-sce} for parameter realizations in scenario $s\in\mathcal{S}$. Finally, by definition of $M$, $z^s$ denotes a binary variable equal to 1 if scenario $s$ satisfies $\sum_{i \in \mathcal{H}} q_i v_i^s \leq y$ and consequently, constraints \eqref{ccc2}--\eqref{ccc3} and \eqref{c4} are equivalent to the chance constraint \eqref{c3}.
\end{proof}


The formulation QMDP-R is nonlinear and nonconvex in general due to constraints \eqref{c8}, which contain the bilinear terms $v_j^s w_{ia}$ for $s \in \mathcal{S}$, $i \in \mathcal{H}$, $a \in \mathcal{A}$. For such MINLP models, most of the existing solution algorithms only guarantee local optimality of the resulting solutions. To obtain a lower bound on the globally optimal objective function value, we approximate each bilinear term $x_{ija}^s=v_j^s w_{ia}$ for $i,j \in \mathcal{H},\ a\in \mathcal{A},\ s\in \mathcal{S}$ in \eqref{c8} by the following linear inequalities using McCormick envelopes \citep{mccormick1976computability}
\begin{subequations} \label{form:mc1}
\begin{align}
  \ell_j^s w_{ia}  \leq x_{ija}^s \leq &\ u_j^s w_{ia}, \\
  v_j^s- (1-w_{ia}) u_j^s \leq   x_{ija}^s \leq &\ v_j^s- (1-w_{ia}) \ell_j^s,\\
  \ell_j^s   \leq v_{j}^s \leq &\ u_j^s , \label{cons:vfBnds}
\end{align}
\end{subequations}
where $\ell_j^s$ and $u_j^s$ are lower and upper bounds on the value of $v_j^s$, respectively. Then, nonlinear constraint \eqref{c8} can be replaced by
\begin{equation}\label{form:mc2}
     v_i^s \geq \sum_{a \in \mathcal{A}} \tilde{c}^s_i(a)  w_{ia} + \gamma \sum_{a \in \mathcal{A}} \sum_{j \in \mathcal{H}} \tilde{P}_{ij}^s(a) x_{ija}^s, \quad i \in \mathcal{H},\ s \in \mathcal{S},
\end{equation}
which provides a linear relaxation of QMDP-R in a lifted space.  

Despite the possibility that all optimal policies are randomized, implementation of deterministic policies may be preferred over randomized policies in certain application areas. These include the cases in which the decision makers are prone to making errors \citep{chen2002dynamic}, and the cases that making randomized decisions raises ethical concerns as in the health care systems \citep{steimle2018multimodel} and humanitarian relief operations. Considering these cases, we make an additional assumption that
\begin{itemize}
    \item[A2.] The policy space is restricted to the set of stationary deterministic policies, denoted as $\Pi_D$, i.e., set $\Pi$ in QMDP is replaced by $\Pi_D$,
\end{itemize}
and propose an MINLP formulation that aims to minimize the $\alpha$-quantile value over all stationary deterministic policies in set $\Pi_D$.

\begin{proposition}\label{prop:mipform-d} 
Under assumptions \textup{A1}--\textup{A2}, problem \eqref{form:mdp-sce} can be restated as the following deterministic equivalent formulation,
\begin{subequations}
\begin{align}
\textup{(QMDP-D)}\quad\min\hspace{0.25cm} & y \nonumber\\
s.t.\ \ & \eqref{ccc1}-\eqref{c4}, \nonumber\\
& w_{ia} \in \{0,1\}, \quad i \in \mathcal{H},\ a \in \mathcal{A}, \nonumber
\end{align}
\end{subequations}
where $M$ represents a large number such that constraint \eqref{ccc3} for $s \in \mathcal{S}$ is redundant if $z^s=0$.
\end{proposition}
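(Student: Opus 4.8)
The plan is to obtain QMDP-D as a direct specialization of Proposition~\ref{prop:mipform}, exploiting the fact that the two formulations are identical except for the domain imposed on the policy variables: QMDP-R allows $w_{ia}\in[0,1]$, whereas QMDP-D requires $w_{ia}\in\{0,1\}$. My first step is to verify that, under this binary restriction, constraint~\eqref{ccc1} characterizes exactly the stationary deterministic policies. For each state $i\in\mathcal{H}$, combining $\sum_{a\in\mathcal{A}}w_{ia}=1$ with $w_{ia}\in\{0,1\}$ forces precisely one action to satisfy $w_{ia}=1$ and all others to vanish; this is exactly a unit vector $\pi_i$, so the feasible vectors $\vw$ are in one-to-one correspondence with the policies in $\Pi_D$. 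Under assumption~A2 this is precisely the policy set over which we wish to optimize.

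Next I would argue that every remaining ingredient of the proof of Proposition~\ref{prop:mipform} carries over verbatim, because that argument used only the property that $\vw$ encodes, for each state, a probability distribution over the action set---a property the binary vectors trivially inherit as a special case. Concretely, for a deterministic $\vw$ selecting action $a_i^\star$ in state $i$, the bilinear sum in~\eqref{c8} collapses to its single surviving term, and~\eqref{c8} together with~\eqref{ccc1} reduces to the scenario Bellman inequalities $v_i^s\ge \tilde{c}_i^s(a_i^\star)+\gamma\sum_{j\in\mathcal{H}}\tilde{P}_{ij}^s(a_i^\star)v_j^s$ for each $s\in\mathcal{S}$. As established in Proposition~\ref{prop:mipform}, these inequalities correctly represent the scenario value functions $v_i^{(\vtc,\vtP)}$ of~\eqref{form:mdp-sce} for the policy induced by $\vw$, while the triple~\eqref{ccc2}--\eqref{ccc3}, \eqref{c4} together with the big-$M$ constant reproduces the chance constraint~\eqref{c3}. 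Hence QMDP-D is exactly the restriction of QMDP-R to $\vpi\in\Pi_D$, which under A1--A2 is the deterministic equivalent of~\eqref{form:mdp-sce}.

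The one point requiring genuine care---and the step I expect to be the main obstacle---is confirming that replacing the Bellman equalities~\eqref{cc4c} by the inequalities~\eqref{c8} remains lossless under the binary restriction. The resolution is identical to the randomized case: for a fixed deterministic policy the induced transition matrix $P^s$ is row-stochastic and $\gamma<1$, so $(I-\gamma P^s)$ is an invertible M-matrix with nonnegative inverse, which forces each feasible $v_i^s$ to dominate the true value function; since $q_i\ge 0$ and the objective drives $\sum_{i\in\mathcal{H}}q_i v_i^s$ downward for every scenario with $z^s=1$, the inequalities~\eqref{c8} become tight at optimality exactly on the scenarios that influence $y$. I would invoke this fact directly from the proof of Proposition~\ref{prop:mipform}, or restate it briefly in the deterministic setting for completeness; everything else is a routine relabeling of the randomized argument.
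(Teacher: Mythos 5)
Your proof is correct and takes essentially the same route as the paper, whose entire argument is that the result ``easily follows from Proposition~\ref{prop:mipform} and the characterization of deterministic policies,'' i.e., exactly your observation that binary $\vw$ satisfying \eqref{ccc1} are in one-to-one correspondence with $\Pi_D$ and that the rest of Proposition~\ref{prop:mipform}'s argument specializes verbatim. Your final paragraph (justifying via the nonnegative inverse of $I-\gamma P^s$ and the downward pressure of the objective that the inequalities \eqref{c8} lose nothing relative to the equalities \eqref{cc4c}) merely makes explicit a point the paper already relies on inside the proof of Proposition~\ref{prop:mipform}, so no new idea is introduced.
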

The proof easily follows from Proposition \ref{prop:mipform} and the characterization of deterministic policies.

Similar to QMDP-R, Formulation QMDP-D is also nonlinear and nonconvex due to the bilinear terms in constraint \eqref{c8}, but in this case, binary representation of deterministic policies can be further utilized to obtain MILP reformulations. When policy vector $\vw$ is binary, the McCormick envelopes \eqref{form:mc1} correspond to an exact linearization of the bilinear terms $x_{ija}^s=v_j^s w_{ia}$ for $i,j \in \mathcal{H},\ a\in \mathcal{A},\ s\in \mathcal{S}$. Hence, the MILP obtained by replacing \eqref{c8} with \eqref{form:mc2} and adding constraints \eqref{form:mc1} is equivalent to QMDP-D. This McCormick reformulation requires the addition of $\mathcal{O}(|\mathcal{S}||\mathcal{A}||\mathcal{H}|^2)$ variables and constraints. An alternative reformulation of QMDP-D as an MILP of smaller size by a factor of $\mathcal{O}(|\mathcal{H}|)$ can be achieved by replacing \eqref{c8} in QMDP-D with linear inequalities
\begin{equation}
v_i^s \geq \tilde{c}^s_i(a) + \gamma \sum_{j \in \mathcal{H}} \tilde{P}_{ij}^s(a) v_j^s  - (1-w_{ia}) M_{is}, \quad i \in \mathcal{H},\ a \in \mathcal{A},\ s \in \mathcal{S}. \label{c9}
\end{equation}
Constraint \eqref{c9} assures that the multi-scenario Bellman equations \eqref{cc4c} in the CCP model \eqref{form:mdp-sce} are satisfied for the selected scenarios, i.e., $z^s=1$, and the policy represented by the state-action pairs ($i,a$) such that $w_{ia}=1$. The big-$M$ term $M_{is}$ for any $i \in \mathcal{H},\ s \in \mathcal{S}$ denotes a large number sufficient to make constraint \eqref{c9} redundant when $w_{ia}=0$ for $a \in \mathcal{A}$. Note that replacing \eqref{c8} with \eqref{c9} does not provide an exact reformulation of QMDP-R.

\ignore{
In addition, even though the MILP reformulation of QMDP-D with McCormick envelopes requires more variables and constraints, it is also likely to provide tighter bounds than the reformulation with constraint \eqref{c9}, because the big-$M$ terms are known to cause weak linear programming relaxations as also demonstrated by our computational experiments in Section \ref{sec:CompSolTim}.
}



\section{Implementation Details} \label{sec:ImpDet}
In this section, we propose preprocessing methods to initialize problem parameters, which may also be used for reducing the problem size. We also provide a heuristic solution algorithm. Note that the proposed methods are applicable for both QMDP-R and QMDP-D, but here we consider on QMDP-R due to its generality.  

\subsection{Preprocessing} \label{sec:prepro}

The formulations for problems QMDP-R and QMDP-D presented in the previous section require determination of the auxiliary terms $M$, $M_{is}$, $\ell_i^s$ and $u_i^s$ for $i \in \mathcal{H},\ s \in \mathcal{S}$.
In what follows, we provide preprocessing procedures to prespecify these terms and to narrow down the solution space before solving the original problem.
Existing solution algorithms for MDPs usually suffer from the size of state and action spaces, often referred to as the curse of dimensionality. In our case, the computational complexity is additionally amplified with the number of scenarios and the combinatorial nature of the quantile calculation. Hence it is worthwhile to search for methods that reduce the size of the problem. 

Let $y^*$ be the optimal objective function value of problem QMDP-R. First, we relax the requirement that the same policy should be selected over all scenarios, and use monotonicity property of the $\var$ function to find bounds on the value of $y^*$. We denote $\bar b$ as a random variable representing the maximum expected total discounted cost of the relaxed MDP model. The realization of $\bar b$ under scenario $s \in \mathcal{S}$ can be computed by solving the following linear programming problem
\begin{subequations}\label{maxProbS}
\begin{align}
\bar{b}^s=\min\hspace{0.25cm} & \sum_{i \in \mathcal{H}} q_i v_i  \\
s.t.\ \ & v_i \geq \tilde{c}^s_i(a) + \gamma \sum_{j \in \mathcal{H}} \tilde{P}_{ij}^s(a) v_j, \quad i \in \mathcal{H},\ a \in \mathcal{A}.  
\end{align}
\end{subequations}
Then we order the realizations of $\bar{b}$ under each scenario as
$\bar{b}^{s_1}\leq \bar{b}^{s_2}\leq \dots \leq \bar{b}^{s_{\revised{n}}},$
and let $k_1 \in \{1,\dots,n\}$ be the order of the scenario such that $\sum_{t=1}^{k_1} p^{s_t} \geq \alpha$ and $\sum_{t=1}^{k_1-1} p^{s_t} < \alpha$, and $b_u=\bar{b}^{s_{k_1}}$. Note that $b_u$ corresponds to the $\var$ of random variable $\bar{b}$ at confidence level $\alpha$. Hence $b_u$ provides an upper bound on $y^*$, i.e., $b_u\geq y^*$, since $\bar{b}^s \geq C(\vpi,\vtc^s,\vtP^{s})$ for all $s\in \mathcal{S}$, $\vpi \in \Pi$. 

Similarly, let $\underline{b}$ be a random variable representing the minimum expected total discounted cost of the MDP with relaxed policy selection requirements, where realizations of $\underline{b}$ under each scenario $s \in \mathcal{S}$ can be computed by solving the linear programs
\begin{subequations}\label{minProbS}
\begin{align}
\underline{b}^s=\max\hspace{0.25cm} & \sum_{i \in \mathcal{H}} q_i v_i  \\
s.t.\ \ & v_i \leq \tilde{c}^s_i(a) + \gamma \sum_{j \in \mathcal{H}} \tilde{P}_{ij}^s(a) v_j, \quad i \in \mathcal{H},\ a \in \mathcal{A}.  
\end{align}
\end{subequations}
Let $b_l$ correspond to the $\var$ of random variable $\underline{b}$ at confidence level $\alpha$. The value of $b_l$ provides a lower bound on $y^*$, i.e., $b_l\leq y^*$ as $\underline{b}^s \leq C(\vpi,\vtc^s,\vtP^{s})$ for all $s\in \mathcal{S}$, $\vpi \in \Pi$ by definition.

Problems \eqref{maxProbS} and \eqref{minProbS} can be efficiently solved using a policy iteration or value iteration algorithm in polynomial time. Using these bounds, we can conclude that any scenario $s$, whose lower bound is greater than the upper bound on the quantile value, i.e., $\underline{b}^s>b_u$, cannot satisfy the term inside chance constraint \eqref{c3} in the optimal policy, hence we can set $z^s=0$. Similarly, any scenario $s\in \mathcal{S}$ with an upper bound value smaller than the lower bound on the quantile function value, i.e., $\bar{b}^s<b_l$ must satisfy the term inside chance constraint \eqref{c3} in the optimal solution. Therefore, the corresponding scenario variable can be set as $z^s=1$. This result can be used for reducing the number of $\vz$ variables. Furthermore, we can add the constraints $b_l \leq y\leq b_u$
into our mathematical model. As previously mentioned, validity of these bounds follows from monotonicity property of the $\var$ function. 
Additionally, the inequalities 
\begin{equation}\label{cons:lb}
    y \geq \underline{b}^s z^s,\quad s \in \mathcal{S}
\end{equation}
are valid due to constraints \eqref{ccc3} and the fact that $\underline{b}^s \leq C(\vpi,\vtc^s,\vtP^{s})$ for all $s\in \mathcal{S}$, $\vpi \in \Pi$. Note that \citet{kuccukyavuz2016cut} propose similar bounding and scenario elimination ideas and demonstrate their effectiveness in the context of multivariate $\cvar$-constrained optimization  (see also, \citealp{Liu2017,Noyan2019}).

Let $\bar{v}_i^s$ and $\underline{v}_i^s$ be the optimal values of variable $v_i$ in problem \eqref{maxProbS} and \eqref{minProbS}, respectively, for state $i \in\mathcal{H}$ and scenario $s\in\mathcal{S}$. Then, we can set the lower and upper bounds on variable $v_i^s$ in \eqref{form:mc1} as $\ell_i^s=\underline{v}_i^s$ and $u_i^s=\bar{v}_i^s$, respectively. This result follows from the fixed-point and contraction properties of the value functions for classical MDPs. The information obtained by solving problems \eqref{maxProbS} and \eqref{minProbS} can also be used to obtain tighter values of the big-$M$ terms in constraints \eqref{ccc3} and \eqref{c9}. Clearly, we can set \revised{$M=b_u-b_l$} in \eqref{ccc3} because its value is bounded by $M \geq \max\limits_{\vv:\eqref{ccc1}-\eqref{ccc4-r}} \sum_{i \in \mathcal{H}} q_i v_i^s - \min\limits_{y:\eqref{ccc1}-\eqref{ccc4-r}} y $ for $s\in \mathcal{S} \text{ such that } z^s =1.$
Moreover, \citet{steimle2018multimodel} show that the big-$M$ term in constraint \eqref{c9} can be set as $M_{is} = \bar{v}_i^s-\underline{v}_i^s$ for all $i\in \mathcal{H},\ s \in\mathcal{S}$. 

\subsection{Obtaining a Feasible Solution}\label{sec:MDPheuristic}

Our preliminary computational experiments suggest that the upper bound $b_u$ described in the previous section can be further improved by finding a feasible solution to problem QMDP-R. Here, we propose a polynomial time heuristic algorithm (Algorithm \ref{alg2}), which benefits from the connection between a substructure of our problem with the robust MDPs to attain a feasible policy effectively.

\begin{algorithm}[ht]
\setstretch{1.20}
Given distinct cost and transition matrices $\{\vtc,\vtP\}_{s\in\mathcal{S}}$ with corresponding probabilities $\{p\}_{s\in\mathcal{S}}$, set $\hat{\vz} \gets \mathbf{0}$\;
\For{each scenario $s \in \mathcal{S}$}{Solve problem \eqref{minProbS} to obtain its optimal objective function value $\underline{b}^s$\;}
Compute $\var_\alpha(\underline{b})$ and find a maximal subset of scenarios $\bar{\mathcal{S}}\subseteq \mathcal{S}$ such that $\underline{b}^s \leq \var_\alpha(\underline{b})$ for each scenario $s \in \bar{\mathcal{S}}$ and $\sum\limits_{s\in\bar{\mathcal{S}}\setminus\{s' \} } p^s < \alpha$ for all $s'\in\bar{\mathcal{S}} $\;
\For{each scenario $s \in \bar{\mathcal{S}}$}{Set $\hat{z}^s \gets 1$\;}
Return \emph{robustPolicySelection($\hat{\vz}$)}\;
\caption{\emph{initialSolution()} \label{alg2}}
\end{algorithm}

Algorithm \ref{alg2} follows two sequential phases: scenario selection and policy selection. In the scenario selection phase, we decide on which scenarios will be enforced to satisfy the chance constraint, i.e., $\hat{z}^s = 1$. As in the previous subsection, we relax the requirement that the selected policy should be the same over all scenarios, and solve problems \eqref{minProbS} independently to obtain the optimal objective function value $\underline{b}^s$ corresponding to each scenario $s \in \mathcal{S}$. Then, we set $\hat{z}^s = 1$ for each scenario $s$ in a maximal subset $\bar{\mathcal{S}}\subseteq \mathcal{S}$ such that $\underline{b}^s \leq \var_\alpha(\underline{b})$ for each scenario $s \in \bar{\mathcal{S}}$ and $\sum\limits_{s\in\bar{\mathcal{S}}\setminus\{s' \} } p^s < \alpha$ for all $s'\in\bar{\mathcal{S}} $.

In the policy selection phase provided in Algorithm \ref{alg3}, we use the scenarios selected in the first phase to obtain a feasible policy and the corresponding quantile value. In other words, we let $\mathcal{S}(\vz)=\bar{\mathcal{S}}$ in Line 1 of Algorithm \ref{alg3} and solve a relaxation of the robust MDP problem 
\begin{equation*}
\text{rMDP($\vz$):} \quad \min\limits_{\vpi\in\Pi} \max\limits_{s\in \mathcal{S}(\vz)}  C(\vpi,\vtc^s,\vtP^{s}), 
\end{equation*}
where $\mathcal{S}(\vz):=\{s\in\mathcal{S}|\ z^s=1\}$, to find a feasible policy for QMDP-R. For any scenario vector $\vz$, rMDP($\vz$) can be seen as an adversarial game, where decision maker selects a stationary policy at the very beginning, and the system evolves based on the worst possible scenario for the selected policy afterwards. An important observation is that for any vector $\vz$ satisfying constraints \eqref{ccc2} and \eqref{c4}, the optimal policy obtained by solving rMDP($\vz$) and its optimal objective function value correspond to a feasible solution of QMDP-R for policy variables $\vw$ and quantile variable $y$, respectively. To see this, we reformulate QMDP-R using the definition of $C(\vpi,\vtc^s,\vtP^{s})$ as
\begin{subequations}
\begin{align*}
\min\limits_{y\in \R,\ \vpi \in \Pi}\hspace{0.25cm}  & y \\
s.t.\ \ & y \geq C(\vpi,\vtc^s,\vtP^{s}) - (1-z^s)M, \quad \quad s \in \mathcal{S} ,\\
& \eqref{ccc2}, \eqref{c4},
\end{align*}
\end{subequations}
or equivalently $\min\limits_{\vz:\eqref{ccc2}, \eqref{c4},\ \vpi \in \Pi}\hspace{0.25cm}   \max\limits_{s\in \mathcal{S}(\vz)}  C(\vpi,\vtc^s,\vtP^{s}).$

Note that the substructure rMDP($\vz$) is a robust MDP, where the uncertainty in the transition matrix is coupled across the time horizon and the state space, i.e., a single realization of the cost and transition matrices is selected randomly at the beginning and it holds for all decision epochs and states of the system. Problem rMDP($\vz$) is known to be NP-hard even when the parameters are allowed to follow a different scenario for each state and only the scenario considered at each time a state is encountered should be consistent over time \citep{iyengar2005robust}. 
 Therefore, in Algorithm \ref{alg3}, using the scenario vector $\hat \vz$ obtained in the scenario selection phase, we find a policy by solving a relaxed version of $\text{rMDP($\hat \vz$)}$, which we describe next.

\begin{algorithm}[ht]
\setstretch{1.20}
Given a small tolerance parameter $\epsilon>0$, set some positive $\hat{\vv}_1\in\R^{|\mathcal{H}|}$, $\mathcal{S}(\vz)\gets \{s\in\mathcal{S}|\ z^s=1\}$ and $t \gets 1$\;
\For{each state $i \in \mathcal{H}$}{ \For{each action $a \in \mathcal{A}$}{  Set $\sigma_{ia} \gets \max\limits_{s \in \mathcal{S}(\vz) }\left\{c_i^s(a) + \gamma  \sum_{j \in \mathcal{H}} \tilde{P}_{ij}^s(a) \hat{v}_t(j)  \right\} $ \;} Compute $\hat{\vv}_{t+1}(i) \gets \min\limits_{a\in \mathcal{A}}\ \sigma_{ia}  $    \;}
\eIf{$||\hat{\vv}_{t+1}-\hat{\vv}_t ||< \frac{(1-\gamma)\epsilon}{\gamma} $}{Go to line 10\;}{Set $t\gets t+1$ and go to line 2\;}
Return $\vpi$ such that $\pi_i \in \argmax\limits_{a \in \mathcal{A}}\ \sigma_{ia}$\;
\caption{\emph{robustPolicySelection($\vz$)} \label{alg3}}
\end{algorithm}

Similar to \citet{nilim2005robust}, we consider a relaxed variant of $\text{rMDP($\hat \vz$)}$ by assuming that the costs and transition probabilities  are independent over different states and actions, and these parameters are allowed to be time-varying. This setting can be seen as a sequential game, where at each time step, an action is taken by the decision maker and accordingly the system generates a cost and makes a transition based on the worst possible parameter realization in scenario set $\mathcal{S}(\hat \vz)$ for the current state and action, in an iterative fashion. 
Note that \citet{nilim2005robust} consider the case of transition matrix uncertainty only and prove that the optimal policy in this setting obeys a set of optimality conditions, which can be solved using a robust value iteration algorithm. 
Here, we further generalize their algorithm for the case of uncertainty in both cost and transition matrices to find a policy that performs well in terms of the VaR objective, but is not necessarily optimal. For the vector $\hat{\vz}$ computed in the first phase, we construct the set $\mathcal{S}(\hat{\vz}):=\{s\in\mathcal{S}|\ \hat{z}^s=1\}$ and solve the following equations
\begin{equation*}
v_i= \min\limits_{a\in \mathcal{A}} \max\limits_{s \in \mathcal{S}(\vz) }\left\{c_i^s(a) + \gamma  \sum_{j \in \mathcal{H}} \tilde{P}_{ij}^s(a) v_j  \right\}, \quad i \in \mathcal{H} 
\end{equation*}
through a variant of the value iteration algorithm as presented in Algorithm \ref{alg3}.
Note that the policy obtained by Algorithm \ref{alg3} and the corresponding value functions provide upper bounds on the results of the robust problem $\text{rMDP($\hat \vz$)}$. Moreover, due to line 10, Algorithm \ref{alg3} is guaranteed to terminate with a deterministic policy, hence the obtained solution is also feasible for problem QMDP-D.

\section{An Application to Inventory Management in Long-Term Humanitarian Relief Operations} \label{sec:appInv}

Long-term humanitarian relief operations, alternatively referred to as continuous aid operations, are vital to sustain daily basic needs of a population affected by a slow-onset disaster including war (Syria, Afghanistan, Iraq), political insurrection (Syria), famine (Yemen, South Sudan, Somalia), drought (Ethiopia) and extreme poverty (Niger, Liberia). Unlike the sudden-onset disasters (e.g., earthquakes, hurricanes, terrorist attacks), they require delivery of materials such as food, water and medical supplies to satisfy a chronic need over a long period of time. Since the progress of a slow-onset disaster usually presents irregularities in terms of its scale and location, the demand is highly uncertain. In addition, the supply levels are also exposed to uncertainty as they mainly depend on donations from multiple resources. More than 90\% of the people affected by slow-onset disasters lives in developing countries, hence the required relief items are usually outsourced from resources in various locations around the world \citep{rottkemper2012transshipment}. Another important consideration in long-term humanitarian operations is the perishability of the relief items. Many items needed during and after a disaster, e.g., food and medication, have a limited shelf life. In addition to the possible uncertainty in the initial shelf life, the remaining shelf life upon arrival is also affected by the uncertainty in the lead times due to unknown location of the donations. Hence, high level of uncertainty inherent in the supply chain makes the system prone to unwanted shortages and disposals.
 To prevent interruptions, the policy makers may interfere through different actions such as campaigns and advertisements that provide additional relief items. 

In this section, we consider an inventory management model for a single perishable item in long-term humanitarian relief operations proposed by \citet{ferreira2018inventory}. The authors formulate the problem as an infinite-horizon MDP with finite state and action spaces, where the states represent possible levels of inventory. The aim is to minimize the long-term average expected cost. Their model assumes that both demand and supply is uncertain following Poisson distributions with known demand and supply rates, respectively, and the expiration time of the supply items is deterministic. These parameters are generally obtained using various forecasting methods. However, due to multiple sources of uncertainty in the context of long-term humanitarian operations, the forecasted values may be erroneous, which may consequently affect the performance of the resulting policy. To handle the parameter uncertainty, different from \cite{ferreira2018inventory}, we assume that demand and supply rates and the expiration time probabilistically take value from a finite set of scenarios $\mathcal{S}$ and the objective in our model is to minimize the $\alpha$-quantile of the total discounted expected total cost with respect to the uncertainty in these parameters.

In what follows, we elaborate on the components and assumptions of the MDP model based on \cite{ferreira2018inventory}.

\noindent\underline{\emph{States:}} The state of the system is represented by the number of available items in the inventory at the beginning of each decision epoch that will not expire before delivery. It is assumed that the inventory has a capacity $K$, e.g., the state space $\mathcal{H} = \{0,1,\dots,K\}$. 

\noindent\underline{\emph{Actions:}} At the beginning of each decision epoch, the decision maker takes an action $a$ from a finite set $\mathcal{A}$ that provides an additional $n_a$ number of relief items. The set of actions taken for each possible state of the system that minimizes the objective function is referred to as the optimal policy.

\noindent\underline{\emph{Transition Probabilities:}} Under each scenario $s \in \mathcal{S}$, the demand ($D$) and supply ($U$) for the item follow Poisson distribution with probability mass functions $f_d^s$ and $f_u^s$ of rates $\mu_d^s$ and $\mu_u^s$, respectively, and the expiration time takes value $t_e^s$. \ignore{We assume that the demand rate is at least as large as the supply rate. }Let $\Delta_{\min}$ and $\Delta_{\max}$ be the minimum and maximum possible difference between the supply and demand at any decision epoch considering a $100(1-\epsilon)$\% confidence level for a small $\epsilon>0$. Note that the minimum possible demand and supply amounts are assumed as zero so that $\Delta_{\min}$ and $\Delta_{\max}$ represent the negative of maximum possible demand and the maximum possible supply, respectively. Then, ignoring the perishability of the item and the actions taken by the decision maker, the probability of transitioning from inventory level $i\in \mathcal{H}$ to $j\in \mathcal{H}$ under scenario $s \in \mathcal{S}$ is
\begin{equation*}
  \hat{P}_{ij}^s=\begin{cases}
    \sum\limits_{\Delta=\Delta_{\min}}^{j-i} \sum\limits_{D=0}^{-\Delta_{\min}} f_d^s(D)f_u^s(D+\Delta), & \text{if $j=0$},\\
    \sum\limits_{\Delta=j-i}^{\Delta_{\max}} \sum\limits_{D=0}^{-\Delta_{\min}} f_d^s(D)f_u^s(D+\Delta), & \text{if $j=K$},\\
    \sum\limits_{D=0}^{-\Delta_{\min}} f_d^s(D)f_u^s(D+j-i), & \text{otherwise}.\\
  \end{cases}
\end{equation*}
Instead of keeping track of the number of items that expire in each period, the model is simplified by assuming that the items procured in each decision epoch have the same expiration time. As a result, at the beginning of each decision epoch, the expiration probability for the whole batch of newly arriving items is computed as the probability of not being able to consume all available items before the expiration time. Note that since the demand follows Poisson distribution with rate $\mu_d^s$, the probability of consuming $k$ items before time $t_e^s$ follows Erlang distribution with parameters $\mu_d^s$ and $k$, which can be stated as $(1-f_e^s(t_e^s,k))$. 
Incorporating perishability of the items based on this simplification, the probability of a transition from inventory level $i\in \mathcal{H}$ to inventory level $j\in \mathcal{H}$ under scenario $s \in \mathcal{S}$ when action $a \in \mathcal{A}$ is taken can be computed as
\begin{equation}\label{prob:trans}
  \tilde{P}_{ij}^s(a)=\begin{cases}
     (1-f_e^s(t_e^s,j))\ \hat{P}_{i+n_a,j}^s, & \text{if $j>i+n_a$},\\
    \sum\limits_{j'=i+n_a+1}^{\min(K,i+n_a+\Delta_{\max})}  f_e^s(t_e^s,j')\ \hat{P}_{i+n_a,j'}^s + \hat{P}_{i+n_a,j}^s, & \text{if $j=i+n_a$},\\
    \hat{P}_{i+n_a,j}^s, & \text{otherwise}.\\
  \end{cases}
\end{equation}
Equation \eqref{prob:trans} follows from the fact that if an arriving batch of items is known to expire, the new arrivals are immediately used to fulfill the demand with priority over the existing inventory (the existing inventory can be used later since it is guaranteed not to expire by the previous assumption), and the remaining items in the batch are disposed. Hence, the case $j>i+n_a$ can occur only if the new items do not expire. Similarly, $j=i+n_a$ implies either that the incoming supply is at least as much as the demand in the current period and the supply surplus is disposed, or demand is equal to the sum of supply and additional items acquired by the action taken. 

\noindent\underline{\emph{Cost function:}} The expected cost of taking action $a \in \mathcal{A}$ at inventory level $i\in \mathcal{H}$ under scenario $s \in \mathcal{S}$, stated as $\tilde c_i^s(a)$, consists of three main components: the inventory holding cost, the expected shortage cost and the expected disposal cost. Assuming a unit disposal cost of $\tilde d$, the total expected disposal cost when action $a \in \mathcal{A}$ is taken at inventory level $i\in \mathcal{H}$ under scenario $s \in \mathcal{S}$ is
\begin{align*}
DC(i,a,s) =& \sum_{\Delta=1}^{\min(K-i-n_a,\Delta_{\max})} f_e^s(t_e^s,i+n_a+\Delta) \hat{P}_{i+n_a,i+n_a+\Delta}^s\ \Delta\ \tilde d \\
& \hspace{1.8cm}  + \sum_{j=K}^{K+n_a+\Delta_{\max}} \sum_{D=0}^{-\Delta_{\min}} f_d^s(D)f_u^s(D+j-i-n_a)\ (j-K)\ \tilde d. 
\end{align*}
The first summation term is due to the expired items, while the second term is  \revised{because the arrivals that cannot be stored in inventory due to capacity limit $K$ are disposed}. Similarly, the shortage costs are stated as 
\begin{align*}
SC(i,a,s) =&  \sum_{j=i+n_a+\Delta_{\min}}^{-1} \sum_{D=0}^{-\Delta_{\min}} f_d^s(D)f_u^s(D+j-i-n_a)\ (-j)\ \tilde u, \quad i\in \mathcal{H},\ a \in \mathcal{A},\ s \in \mathcal{S},
\end{align*}
where $\tilde u$ is the unit shortage cost. Assuming that the unit inventory holding cost is $h$, the total expected cost of taking action $a \in \mathcal{A}$ at inventory level $i\in \mathcal{H}$ under scenario $s \in \mathcal{S}$ can be computed using 
$ \tilde c_i^s(a) = hi + DC(i,a,s)+ SC(i,a,s), \quad i\in \mathcal{H},\ a \in \mathcal{A},\ s \in \mathcal{S}. $ 
Note that the cost structure may be different depending on the characteristics of the environment in consideration. The MDP formulation provides decision makers the flexibility to use even nonconvex cost functions. Our methodology only requires the cost terms to be bounded. 

\section{Computational Study} \label{sec:compStudy}

In this section, we conduct computational experiments on the long-term humanitarian relief operations inventory management problem described in Section \ref{sec:appInv} to examine the effects of incorporating risk aversion towards parameter uncertainty into MDP models and to compare the efficiency of different solution approaches. The problem instances are generated based on the experiments provided in \cite{ferreira2018inventory} considering the inventory of a blood center, which collects and distributes blood packs to support humanitarian relief operations. We suppose that the inventory replenishment decisions are made on a weekly basis, and the weekly demand and supply rates for the blood packs take values \revised{uniformly} in the intervals $[30,130]$ and $[20,80]$, respectively. After donation, each blood pack has a shelf life of six weeks, however unknown lead times may affect the remaining shelf life at the time of arrival to the blood center. Hence, the shelf life is assumed to be \revised{uniform} on the interval $[1,6]$. In case of need, the center may procure additional supply of blood packs by sending up to $V\in \{2,3,4\}$ blood collection vehicles to distant areas so that $\mathcal{A} = \{0,1,\dots,V\}$. Each vehicle collects additional 20 blood packs at the expense of incurring a certain cost. We use the cost parameters for additional procurement, inventory holding costs, disposal costs and shortage costs given in \cite{ferreira2018inventory}. Note that state action costs are scaled by 1000 in our computations.
We additionally assume that the blood center has a capacity of $K\in\{50,100,150\}$ units and the blood packs are in batches of 10 units so that $\mathcal{H}= \{0,1,\dots,\lfloor \frac{K}{10}\rfloor\}$. Different than \cite{ferreira2018inventory}, we generate random instances with \revised{equiprobable scenarios $\mathcal{S}=\{1,\dots,n\}$ for $n \in \{50,100,250,500,1000\}$}, where the parameters of shelf life and demand and supply rates for each scenario randomly take value on their respective intervals stated above. The distribution on the initial state is assumed to be uniform as well.

All experiments are performed using single thread of a Linux server with  Intel Haswell E5-2680 processor at 2.5 GHz and 128 GB of RAM using Python 3.6 \revised{combined with} Gurobi Optimizer 7.5.1 \revised{for MILP formulations, and Pyomo 5.5 \citep{pyomo} and Bonmin 1.8 \citep{bonmin} for MINLP formulations}. The time limit for each instance on Gurobi is set to 3600 seconds and we use the default settings for the MIP gap and feasibility tolerances. The results are obtained for the instances with \revised{$\alpha\in \{0.90,0.95,1\}$} and $\gamma=0.99$. Unless otherwise stated, each reported value corresponds to the average of \revised{five} replications.


\subsection{The impact of deterministic policies}

We first investigate how the quantile value \revised{and the computational performance of solution methods are} affected by the choice of narrowing down the policy space to deterministic policies by comparing the solutions obtained by solving QMDP-R, which includes randomized policies, with the solutions of QMDP-D considering only deterministic policies. \revised{Note that implementation of a randomized policy in the context of humanitarian inventory management may require ordering a random quantity of critical humanitarian supplies (e.g., blood packs) even if the current inventory level is extremely low. Deterministic policies, on the other hand, lead to consistent order quantities at each inventory level and therefore are more desirable in such humanitarian settings due to their interpretability---in the sense that they are easier to explain and justify. Hence, our goal in this section is mainly to determine this cost of interpretability and also to investigate computational aspects of obtaining optimal randomized policies for broader contexts, where implementation of randomized policies does not raise ethical/practical issues, such as power management, wireless transmission systems, infrastructure maintenance and rehabilitation and queuing systems with reservation as previously mentioned.}

Because finding a globally optimal solution for \revised{the nonconvex formulation} QMDP-R is computationally challenging, we use the open-source MINLP solver Bonmin, which provides local optima to nonconvex problems, in combination with Python-based, open-source optimization modeling language Pyomo. The solver is warm-started with the policy obtained by solving Algorithm \ref{alg2}. Since this approach only guarantees local optimality, we additionally obtain lower bounds by solving the McCormick relaxation of QMDP-R as described in Section \ref{sec:modelForm}. \revised{Our preliminary experiments demonstrate that the McCormick relaxation provides up to 1.26\% tighter lower bound values than the lower bound computed by considering each scenario separately. Hence, we opt for the McCormick relaxation for the analysis in this section, where our main concern is to have as tight a lower bound as possible. On the other hand, we use the lower bounds obtained by considering each scenario separately in the preprocessing phase, because this is much more efficient than the McCormick relaxation of QMDP-R, which is a mixed-integer program (due to $z^s$ variables) requiring significant solution times when the sample size is large (see Table 1).
The McCormick relaxation of QMDP-R and deterministic policy problem QMDP-D are solved under the best settings as detailed in Section \ref{sec:CompSolTim}.} 

\begin{table}[htbp]
  \centering
  \caption{Comparison of deterministic and randomized policies}
  \resizebox{16cm}{!}{%
    \begin{tabular}{rrr|rr|rr|rrr}
    \multicolumn{3}{c|}{Instance}     & \multicolumn{2}{c|}{Deterministic}      & \multicolumn{2}{c|}{Randomized -- NL}       & \multicolumn{3}{c}{\revised{Lower bound -- MC}}  \\ \hhline{|---|--|--|---|}
    $\mathcal{|S|}$     & $\mathcal{|H|}$     & $\mathcal{|A|}$     & Time (s) & Gap (\%)  & Time (s) & Diff. (\%)  & Time (s) & Gap (\%) & Diff. (\%) \\ \hhline{|---|--|--|---|}
50	&	6	&	3	&	0.53	&	0.00	&	9.42	&	0.11	&	0.17	&	0.00	&	1.43	\\
50	&	6	&	4	&	1.54	&	0.00	&	10.31	&	0.46	&	0.21	&	0.00	&	6.74	\\
50	&	6	&	5	&	6.96	&	0.00	&	75.52	&	0.40	&	0.26	&	0.00	&	14.80	\\
50	&	11	&	3	&	5.01	&	0.00	&	31.38	&	0.00	&	0.45	&	0.00	&	0.41	\\
50	&	11	&	4	&	48.97	&	0.00	&	52.73	&	0.10	&	0.64	&	0.00	&	1.90	\\
50	&	11	&	5	&	1371.54	&	0.00	&	432.63	&	0.21	&	2.31	&	0.00	&	6.85	\\
50	&	16	&	3	&	48.41	&	0.00	&	146.35	&	0.03	&	0.84	&	0.00	&	0.30	\\
50	&	16	&	4	&	$773.00^{\blue{1}}$	&	0.26	&	307.02	&	0.05	&	1.22	&	0.00	&	0.77	\\
50	&	16	&	5	&	$2883.24^{\blue{4}}$	&	2.72	&	$2502.51^{\red{1}}$	&	-0.47	&	6.32	&	0.00	&	3.33	\\
100	&	6	&	3	&	1.11	&	0.00	&	18.29	&	0.13	&	0.91	&	0.00	&	0.69	\\
100	&	6	&	4	&	6.76	&	0.00	&	83.42	&	0.30	&	4.86	&	0.00	&	8.11	\\
100	&	6	&	5	&	109.59	&	0.00	&	130.38	&	0.65	&	2.02	&	0.00	&	19.91	\\
100	&	11	&	3	&	1.75	&	0.00	&	71.51	&	0.02	&	5.61	&	0.00	&	0.12	\\
100	&	11	&	4	&	331.41	&	0.00	&	246.10	&	0.18	&	22.41	&	0.00	&	2.57	\\
100	&	11	&	5	&	$3483.96^{\blue{4}}$	&	7.61	&	4940.98	&	0.07	&	8.34	&	0.00	&	10.60	\\
100	&	16	&	3	&	4.07	&	0.00	&	296.91	&	0.02	&	35.50	&	0.00	&	0.07	\\
100	&	16	&	4	&	$1762.18^{\blue{2}}$	&	1.04	&	1789.07	&	0.07	&	20.23	&	0.00	&	1.70	\\
100	&	16	&	5	&	$3600.01^{\blue{5}}$	&	5.07	&	2359.51	&	0.05	&	23.11	&	0.00	&	5.53	\\
250	&	6	&	3	&	3.40	&	0.00	&	134.41	&	0.16	&	3.70	&	0.00	&	1.38	\\
250	&	6	&	4	&	26.24	&	0.00	&	3636.85	&	0.10	&	36.90	&	0.00	&	6.89	\\
250	&	6	&	5	&	317.29	&	0.00	&	$5177.90^{\blue{1}}$	&	-0.73	&	51.30	&	0.00	&	13.01	\\
250	&	11	&	3	&	13.97	&	0.00	&	588.75	&	0.02	&	182.71	&	0.00	&	0.26	\\
250	&	11	&	4	&	$1558.46^{\blue{1}}$	&	0.49	&	$10329.68^{\blue{2}}$	&	-0.03	&	74.69	&	0.00	&	2.56	\\
250	&	11	&	5	&	$3600.03^{\blue{5}}$	&	5.19	&	$3713.75^{\blue{4}}$	&	-1.11	&	126.33	&	0.00	&	6.26	\\
250	&	16	&	3	&	42.39	&	0.01	&	$1700.00^{\blue{1}}$	&	-0.02	&	5.03	&	0.00	&	0.15	\\
250	&	16	&	4	&	$3035.12^{\blue{4}}$	&	1.42	&	$3913.63^{\blue{5}}$	&	-0.87	&	$2532.96^{\blue{2}}$	&	1.64	&	0.70	\\
250	&	16	&	5	&	$3600.01^{\blue{5}}$	&	5.34	&	$3887.01^{\blue{4}}$	&	-1.56	&	$3587.00^{\blue{4}}$	&	14.49	&	1.37	\\
500	&	6	&	3	&	10.04	&	0.00	&	884.64	&	0.09	&	1.21	&	0.00	&	1.04	\\
500	&	6	&	4	&	228.39	&	0.00	&	$2123.58^{\blue{2}}$	&	-0.28	&	200.24	&	0.00	&	7.41	\\
500	&	6	&	5	&	$3137.55^{\blue{2}}$	&	2.42	&	$3767.50^{\blue{5}}$	&	-1.18	&	116.72	&	0.00	&	16.40	\\
500	&	11	&	3	&	30.83	&	0.00	&	1305.78	&	0.02	&	4.42	&	0.00	&	0.24	\\
500	&	11	&	4	&	$3077.46^{\blue{3}}$	&	0.74	&	$3692.29^{\blue{5}}$	&	-0.15	&	$2150.46^{\blue{1}}$	&	0.33	&	2.41	\\
500	&	11	&	5	&	$3600.02^{\blue{5}}$	&	7.48	&	$4166.01^{\blue{5}}$	&	-1.12	&	$3600.08^{\blue{5}}$	&	36.17	&	-0.57	\\
500	&	16	&	3	&	264.96	&	0.00	&	$2872.86^{\blue{2}}$	&	-0.03	&	$3600.19^{\blue{5}}$	&	0.27	&	-0.07	\\
500	&	16	&	4	&	$3600.02^{\blue{5}}$	&	1.36	&	$6368.03^{\blue{5}}$	&	-0.86	&	$3600.13^{\blue{5}}$	&	37.67	&	-0.97	\\
500	&	16	&	5	&	$3600.04^{\blue{5}}$	&	6.72	&	$5627.73^{\blue{5}}$	&	-1.26	&	$3600.17^{\blue{5}}$	&	81.17	&	-1.61	\\
1000	&	6	&	3	&	74.47	&	0.00	&	$1864.43^{\blue{1}}$	&	-0.07	&	1496.29	&	0.00	&	1.56	\\
1000	&	6	&	4	&	$1930.16^{\blue{1}}$	&	0.50	&	$3691.58^{\blue{5}}$	&	-0.45	&	$2163.89^{\blue{2}}$	&	5.35	&	7.10	\\
1000	&	6	&	5	&	$3600.07^{\blue{5}}$	&	8.44	&	$3682.36^{\blue{5}}$	&	-2.22	&	$3600.05^{\blue{5}}$	&	30.16	&	8.18	\\
1000	&	11	&	3	&	210.63	&	0.00	&	$2960.04^{\blue{2}}$	&	0.02	&	$3600.23^{\blue{5}}$	&	14.97	&	-0.20	\\
1000	&	11	&	4	&	$3600.03^{\blue{5}}$	&	3.07	&	$3807.47^{\blue{5}}$	&	0.04	&	$3600.12^{\blue{5}}$	&	81.79	&	-1.18	\\
1000	&	11	&	5	&	$3600.02^{\blue{5}}$	&	6.95	&	$4296.76^{\blue{5}}$	&	-0.30	&	$3600.19^{\blue{5}}$	&	100.00	&	-2.71	\\
1000	&	16	&	3	&	$2003.15^{\blue{2}}$	&	0.24	&	$17223.97^{\blue{5}}$	&	-0.04	&	$3600.24^{\blue{5}}$	&	28.95	&	-0.09	\\
1000	&	16	&	4	&	$3600.09^{\blue{5}}$	&	3.58	&	$9645.38^{\blue{5}}$	&	-0.09	&	$3600.35^{\blue{5}}$	&	85.56	&	-0.25	\\
1000	&	16	&	5	&	$3600.11^{\blue{5}}$	&	6.30	&	$11432.46^{\blue{5}}$	&	-0.06	&	$3600.37^{\blue{5}}$	&	100.00	&	0.42		 \\ \hhline{|---|--|--|---|}
\multicolumn{3}{l|}{Average} & $1459.67^{\blue{1.76}}$ & 1.71 & $3022.20^{\blue{1.87},\red{0.02}}$ & -0.21 & $1166.03^{\blue{1.42}}$ & 13.75 & 3.46 \\
\multicolumn{3}{l|}{Maximum} & $3600.11^{\blue{5}}$ & 8.44& $17223.97
^{\blue{5},\red{1}}$ & 0.65 & $3600.37^{\blue{5}}$ & 100.00 & 19.91

    \end{tabular}%
    }
  \label{tab:deterVSrand}%
\end{table}%

Table \ref{tab:deterVSrand} compares the results \revised{at confidence level $\alpha=0.95$} for deterministic policies (QMDP-D) reported under column ``Deterministic'' with the results of the case with randomized policies: the locally optimal solution obtained by solving QMDP-R using the MINLP solver Bonmin under column ``Randomized -- NL'' and its lower bound provided by the McCormick relaxation under the column ``Lower bound -- MC''. \revised{Each value under column ``Time (s)'' refers to the solution time of the corresponding problem. It can be seen that Gurobi terminates within the time limit of one hour for the instances of QMDP-D and the McCormick relaxation of QMDP-R, whereas Bonmin takes longer than the time limit to terminate for some QMDP-R instances due to its non-preemptive internal processes. The blue (resp., red) superscripts correspond to the number of instances that the solver exceeded the time limit with (resp., without) a feasible solution. We additionally report the optimality gap at the time of termination for linear formulations QMDP-D and the McCormick relaxation under the columns ``Gap (\%)''. Note that the nonlinear solver does not provide an optimality gap for QMDP-R because it can only prove local optimality.} The percentage difference between the optimal objective value considering only the deterministic policies, denoted by ``$o_d$'', and the objective value of each solution approach allowing for randomized policies, denoted by ``$o_r$'', is reported as Diff. (\%) = $100 \times \frac{o_d-o_r}{o_d}$ under the column of the corresponding solution method. 

\revised{The results in Table \ref{tab:deterVSrand} show that solving the nonlinear formulation QMDP-R provides solutions which perform at least as well as the optimal deterministic policies for all instances that terminate with a feasible solution within one hour. The gain achieved by permitting randomized policies is at most 0.65\%. However, considering the time limit, the best deterministic policy outperforms the best randomized policy by 0.21\% on average.}
Additionally, the results for the McCormick relaxation, reported under column ``Lower bound -- MC'', indicate that, for our problem instances, the maximum possible gain in the optimal quantile value obtained by allowing for randomized policies is bounded by \revised{3.46\%} on  average and \revised{19.91\%} at maximum. We emphasize that, unlike ``Randomized -- NL'' that produces feasible solutions to QMDP-R, the solutions of ``Lower bound -- MC'' provide only theoretical upper bounds on the actual improvement that can be gained by allowing randomized policies. The results of ``Lower bound -- MC'' may not be possible to achieve in practice because the reformulation using McCormick envelopes corresponds to a relaxation of the original problem QMDP-R. The lower bounds provided by ``Lower bound -- MC'' also indicate that for \revised{149} of \revised{225} problem instances, the reduction in risk ($\var_\alpha$) due to policy randomization is guaranteed not to be more than 3\% with respect to the objective value of the optimal deterministic policy. \revised{Moreover, a deterministic policy of good quality can usually be obtained in reasonable solution times (1459.67 seconds on average), comparable to the solution times of the QMDP-R (3022.20 seconds on average) and its McCormick relaxation (1166.03 seconds on average).  Furthermore,  ``Lower bound -- MC'' fails to provide valid lower bounds for the problem for large instances that reach the time limit as evidenced by the negative entries under the Diff column.   }

Motivated by these results \revised{and given our humanitarian problem context}, in the following sections, we focus on the problem QMDP-D, which only considers deterministic policies.

\subsection{The impact of parameter uncertainty}

Next we examine the effect of incorporating parameter uncertainty into the MDP model in terms of the value gained by using the available stochastic information and the potential loss due to not having perfect information on the true realization of random parameters.

In Table \ref{tab:valueStoc}, we compare the optimal objective function value of the quantile optimization problem QMDP-D, \revised{denoted as} OPT, with two benchmark cases \revised{for $\alpha\in\{0.90,0.95,1\}$}. \revised{Each blue superscript represents the number of replications that terminate with a feasible solution due to time limit for the corresponding $\alpha$-quantile minimization problem.} The first benchmark assumes that the decision maker waits until observing the actual parameter realizations and makes a decision for each scenario independently. This approach does not provide a feasible solution to the original problem because it may produce distinct policies for each scenario. Clearly the quantile value in this case corresponds to a lower bound on the OPT since it can be stated as $\text{LB} = \var_{\alpha}(\underline{b})$, where the realization under scenario $s\in \mathcal{S}$ is $\underline{b}^s = \min\limits_{\vpi\in\Pi_D} C(\vpi,\vtc^s,\vtP^s)$. Using this value, we compute the value of perfect information on MDP parameters as $\text{VPI} = \text{OPT} - \text{LB}$ and its percentage value as $\text{\%VPI} = 100 \times \frac{\text{OPT} - \text{LB}}{\text{OPT}}$.  Column  \revised{\%VPI}  presented in Table \ref{tab:valueStoc} indicates that the losses in the quantile function value due to not knowing the true parameter realizations \revised{are 0.58\%, 4.77\% and 8.95\% on average, and at most 4.19\%, 20.03\% and 25.16\%  for $\alpha= 1, 0.95, 0.90$}, respectively, for our problem instances. Therefore, it is worthwhile to use additional information on the uncertain parameters whenever possible. \revised{Incorporating additional information is especially valuable for decision makers whose main concern is to ensure a robust system performance against parameter uncertainty at lower confidence levels.}

\begin{table}[ht]
  \centering
  \caption{Value of perfect information and value of stochastic solution of the quantile optimization problem}
  \resizebox{13cm}{!}{%
    \begin{tabular}{ccc|rrr|rrr}
    \multicolumn{3}{c|}{Instance}    &  \multicolumn{3}{c|}{\%VPI}  & \multicolumn{3}{c}{\%VSS} \\ \hhline{|---|---|---|}
    $\mathcal{|S|}$     & $\mathcal{|H|}$     & $\mathcal{|A|}$   & $\alpha = 1$ & $\alpha = 0.95$ & $\alpha = 0.90$ & $\alpha = 1$ & $\alpha = 0.95$ & $\alpha = 0.90$  \\ \hhline{|---|---|---|}
50	&	6	&	3	&	0.48	&	1.43	&	2.92	&	0.91	&	4.30	&	3.08	\\
50	&	6	&	4	&	1.59	&	6.74	&	12.91	&	7.66	&	9.87	&	13.59	\\
50	&	6	&	5	&	2.09	&	14.80	&	17.30	&	13.07	&	22.94	&	23.86	\\
50	&	11	&	3	&	0.08	&	0.41	&	1.11	&	0.41	&	0.97	&	1.67	\\
50	&	11	&	4	&	0.28	&	1.90	&	6.18	&	1.43	&	3.27	&	4.61	\\
50	&	11	&	5	&	0.45	&	6.92	&	$8.89^{\blue{4}}$	&	2.34	&	5.97	&	$5.67^{\blue{4}}$	\\
50	&	16	&	3	&	0.05	&	0.30	&	1.20	&	0.25	&	0.84	&	1.50	\\
50	&	16	&	4	&	0.18	&	$0.77^{\blue{1}}$	&	$4.07^{\blue{4}}$	&	1.04	&	$1.78^{\blue{1}}$	&	$2.31^{\blue{4}}$	\\
50	&	16	&	5	&	0.30	&	$3.44^{\blue{4}}$	&	$7.27^{\blue{5}}$	&	1.99	&	$3.02^{\blue{4}}$	&	$4.90^{\blue{5}}$	\\
100	&	6	&	3	&	0.18	&	0.69	&	3.61	&	0.44	&	0.98	&	1.17	\\
100	&	6	&	4	&	2.22	&	8.11	&	13.77	&	3.93	&	4.71	&	3.89	\\
100	&	6	&	5	&	4.19	&	20.03	&	25.16	&	14.40	&	15.93	&	14.63	\\
100	&	11	&	3	&	0.04	&	0.12	&	1.36	&	0.20	&	0.38	&	0.92	\\
100	&	11	&	4	&	0.35	&	2.61	&	$8.00^{\blue{2}}$	&	1.72	&	2.60	&	$4.57^{\blue{2}}$	\\
100	&	11	&	5	&	0.86	&	$10.70^{\blue{4}}$	&	$17.25^{\blue{5}}$	&	2.84	&	$7.10^{\blue{4}}$	&	$5.82^{\blue{5}}$	\\
100	&	16	&	3	&	0.03	&	0.07	&	$1.19^{\blue{1}}$	&	0.17	&	0.23	&	$0.84^{\blue{1}}$	\\
100	&	16	&	4	&	0.19	&	$1.79^{\blue{2}}$	&	$5.23^{\blue{5}}$	&	1.01	&	$1.72^{\blue{2}}$	&	$5.00^{\blue{5}}$	\\
100	&	16	&	5	&	0.44	&	$5.72^{\blue{5}}$	&	$11.44^{\blue{5}}$	&	2.52	&	$6.05^{\blue{5}}$	&	$3.81^{\blue{5}}$	\\
250	&	6	&	3	&	0.08	&	1.38	&	3.35	&	0.32	&	0.96	&	1.46	\\
250	&	6	&	4	&	1.61	&	6.98	&	11.75	&	3.42	&	3.41	&	3.47	\\
250	&	6	&	5	&	1.04	&	13.13	&	24.77	&	11.42	&	15.81	&	15.03	\\
250	&	11	&	3	&	0.03	&	0.27	&	0.89	&	0.15	&	0.45	&	1.74	\\
250	&	11	&	4	&	0.37	&	$2.71^{\blue{1}}$	&	$5.90^{\blue{4}}$	&	1.03	&	$1.28^{\blue{1}}$	&	$2.29^{\blue{4}}$	\\
250	&	11	&	5	&	0.30	&	$6.50^{\blue{5}}$	&	$13.55^{\blue{5}}$	&	1.70	&	$4.00^{\blue{5}}$	&	$6.08^{\blue{5}}$	\\
250	&	16	&	3	&	0.02	&	0.15	&	$0.70^{\blue{1}}$	&	0.16	&	0.34	&	$1.12^{\blue{1}}$	\\
250	&	16	&	4	&	$0.38^{\blue{1}}$	&	$1.92^{\blue{4}}$	&	$4.04^{\blue{5}}$	&	$0.52^{\blue{1}}$	&	$0.99^{\blue{4}}$	&	$2.14^{\blue{5}}$	\\
250	&	16	&	5	&	0.23	&	$5.81^{\blue{5}}$	&	$9.69^{\blue{5}}$	&	1.96	&	$2.62^{\blue{5}}$	&	$4.32^{\blue{5}}$	\\
500	&	6	&	3	&	0.10	&	1.04	&	3.00	&	0.31	&	0.65	&	1.14	\\
500	&	6	&	4	&	1.84	&	7.61	&	11.99	&	2.86	&	2.72	&	2.92	\\
500	&	6	&	5	&	0.87	&	$16.97^{\blue{2}}$	&	$23.45^{\blue{4}}$	&	7.12	&	$14.78^{\blue{2}}$	&	$15.70^{\blue{4}}$	\\
500	&	11	&	3	&	0.04	&	0.24	&	1.10	&	0.16	&	0.21	&	0.48	\\
500	&	11	&	4	&	0.38	&	$2.64^{\blue{3}}$	&	$5.71^{\blue{5}}$	&	1.03	&	$1.72^{\blue{3}}$	&	$2.09^{\blue{5}}$	\\
500	&	11	&	5	&	0.29	&	$7.79^{\blue{5}}$	&	$16.86^{\blue{5}}$	&	1.44	&	$3.74^{\blue{5}}$	&	$3.02^{\blue{5}}$	\\
500	&	16	&	3	&	0.02	&	0.20	&	$0.44^{\blue{1}}$	&	0.16	&	0.20	&	$0.39^{\blue{1}}$	\\
500	&	16	&	4	&	0.25	&	$1.60^{\blue{5}}$	&	$6.20^{\blue{5}}$	&	0.49	&	$1.13^{\blue{5}}$	&	$-0.44^{\blue{5}}$	\\
500	&	16	&	5	&	$0.22^{\blue{1}}$	&	$6.84^{\blue{5}}$	&	$15.60^{\blue{5}}$	&	$0.91^{\blue{1}}$	&	$1.51^{\blue{5}}$	&	$-1.53^{\blue{5}}$	\\
1000	&	6	&	3	&	0.15	&	1.63	&	3.50	&	0.26	&	0.45	&	0.68	\\
1000	&	6	&	4	&	1.57	&	$7.66^{\blue{1}}$	&	12.92	&	1.89	&	$2.44^{\blue{1}}$	&	2.35	\\
1000	&	6	&	5	&	0.67	&	$14.17^{\blue{5}}$	&	$23.70^{\blue{5}}$	&	3.12	&	$13.44^{\blue{5}}$	&	$13.19^{\blue{5}}$	\\
1000	&	11	&	3	&	0.05	&	0.57	&	$1.26^{\blue{3}}$	&	0.14	&	0.20	&	$0.32^{\blue{3}}$	\\
1000	&	11	&	4	&	0.55	&	$3.19^{\blue{5}}$	&	$7.27^{\blue{5}}$	&	0.93	&	$1.17^{\blue{5}}$	&	$1.31^{\blue{5}}$	\\
1000	&	11	&	5	&	0.24	&	$6.89^{\blue{5}}$	&	$18.49^{\blue{5}}$	&	1.31	&	$2.45^{\blue{5}}$	&	$-0.08^{\blue{5}}$	\\
1000	&	16	&	3	&	0.04	&	$0.44^{\blue{2}}$	&	$2.54^{\blue{5}}$	&	0.15	&	$0.26^{\blue{2}}$	&	$-1.01^{\blue{5}}$	\\
1000	&	16	&	4	&	$0.50^{\blue{2}}$	&	$3.36^{\blue{5}}$	&	$7.70^{\blue{5}}$	&	$1.06^{\blue{2}}$	&	$0.13^{\blue{5}}$	&	$-2.26^{\blue{5}}$	\\
1000	&	16	&	5	&	$0.19^{\blue{1}}$	&	$6.34^{\blue{5}}$	&	$17.44^{\blue{5}}$	&	$1.36^{\blue{1}}$	&	$-0.07^{\blue{5}}$	&	$-5.03^{\blue{5}}$	\\
 \hhline{|---|---|---|}
    \multicolumn{3}{l|}{Average}       & $0.58^{\blue{0.11}}$  & $4.77^{\blue{1.76}}$  & $8.95^{\blue{2.42}}$  & $2.26^{\blue{0.11}}$  & $3.77^{\blue{1.76}}$  & $3.84^{\blue{2.42}}$ \\
    \multicolumn{3}{l|}{Maximum}   & $4.19^{\blue{2}}$ & $20.03^{\blue{5}}$ & $25.16^{\blue{5}}$ & $14.40^{\blue{2}}$ & $22.94^{\blue{5}}$ & $23.86^{\blue{5}}$ \\
    \end{tabular}%
    }
  \label{tab:valueStoc}%
\end{table}%

The second benchmark considers the quantile function value corresponding to a policy obtained by solving a single MDP with expected parameter values, referred as the mean value problem in stochastic programming. This provides a heuristic approach to solve QMDP, and the corresponding \revised{$\alpha$-quantile}, denoted as MV, can be computed by treating the expected total cost for the policy obtained by solving the mean value problem under each scenario as a realization of the corresponding random variable, and it provides an upper bound on OPT. Based on MV, the value of incorporating stochasticity of parameters into the MDP model can be measured as $\text{VSS} = \text{MV}-\text{OPT}$ and $\text{\%VSS} = 100 \times \frac{\text{MV} - \text{OPT}}{\text{MV}}$. \revised{The negative \%VSS values in Table \ref{tab:valueStoc} are due to the instances that terminate with a suboptimal OPT value because of the time limit.} The results show that by incorporating uncertainty in the parameters, the $\alpha$-quantile value can be improved by \revised{2.26\%, 3.77\% and 3.84\%} on average with a maximum improvement of \revised{14.40\%, 22.94\% and 23.86\% at $\alpha= 1, 0.95, 0.90 $, respectively.} Hence, it is possible to achieve significant reductions in risk by incorporating parameter uncertainty into MDPs, and thereby reducing the possibility of undesirable outcomes. \revised{Additionally, just as in the case of VPI, the value of considering parameter uncertainty is usually higher for risk-averse decision making at lower confidence levels.}    

\subsection{The impact of incorporating risk aversion}

Given the advantages of incorporating parameter uncertainty into MDPs, next, we analyze how the performance of a policy would be affected by decision makers' attitude towards risk. For rarely occurring events such as disasters, it is important to ensure that the selected policy performs well even under undesirable realizations of random parameters. \revised{Our goal in this section is to demonstrate potential reductions in risk achieved by solving the proposed risk-averse model instead of the expected value problem introduced by \cite{steimle2018multimodel} for such cases where the decision makers are concerned about their performance with respect to the worst $\alpha$-quantile. 
 We point out that an appropriate modeling approach and confidence level $\alpha$, which reflect decision makers' preferences and attitude towards parameter uncertainty, can be determined by performing a sensitivity analysis using our proposed model at different $\alpha$ values as well as the expected value problem.} 

Table \ref{tab:expATvar} reports the $\alpha$-quantile value for the policy obtained by solving the expected value problem given in \citet{steimle2018multimodel} \revised{under a time limit of one hour}, namely \revised{``E-VaR''}, and compares it to \revised{the best $\alpha$-quantile value obtained within one hour}, denoted as ``$\var^*$'' \revised{at confidence levels $\alpha = 0.90,0.95,1$ for small instances with 50--100 scenarios and large instances with 250--500 scenarios.}
The percentage deviation of the quantile value for the expected value policy from the \revised{best} quantile value is given under the \revised{corresponding} column \revised{``E-VaR $- \text{VaR}^*$ (\%)''} as $100 \times \frac{\text{E-VaR}- \var^*}{\text{E-VaR}}$. \revised{Each blue superscript denotes the number of replications for which the corresponding expected value problem or the $\alpha$-quantile problem terminates with a feasible solution due to time limit. Note that negative values are reported for the instances with larger number of scenarios for which the expected value policy performs better than the VaR-minimizing policy due to the increased computational complexity of the quantile optimization problem.} 
The results in Table \ref{tab:expATvar} show that the policy obtained by solving the expected value problem may perform worse than the $\var$-optimizing policy \revised{by up to 3.96\%, 5.30\% and 5.79\% for smaller problem instances and up to 3.54\%, 3.05\% and 3.02\% for large instances in terms of the $\alpha$-quantile value, for $\alpha = 1, 0.95, 0.90$, respectively.} 
Thus, minimizing the VaR objective instead of the expected value may be beneficial for taking extreme outcomes into consideration and the proposed risk-averse modeling framework may provide significant savings in the quantile value.

\begin{table}[h]
  \centering
  \caption{Performance of the optimal policy of the expected value problem at the $\alpha$-quantile}
    \begin{tabular}{ccc|rrr|ccc|rrr}
    \multicolumn{6}{c|}{Small Instances} & \multicolumn{6}{c}{Large Instances} \\ \hhline{|---|---|---|---|}
    \multicolumn{3}{c|}{Instance} & \multicolumn{3}{c|}{E-VaR $- \text{VaR}^*$ (\%)} & \multicolumn{3}{c|}{Instance} & \multicolumn{3}{c}{E-VaR $- \text{VaR}^*$ (\%)} \\ \hhline{|---|---|---|---|}
    $\mathcal{|S|}$     & $\mathcal{|H|}$     & $\mathcal{|A|}$   & $\alpha = 1$ & $\alpha = 0.95$ & $\alpha = 0.90$ &  $\mathcal{|S|}$     & $\mathcal{|H|}$     & $\mathcal{|A|}$   & $\alpha = 1$ & $\alpha = 0.95$ & $\alpha = 0.90$ \\ \hhline{|---|---|---|---|}
	50	&	6	&	3	&	1.04	&	1.47	&	1.83	&	250	&	6	&	3	&	0.41	&	0.98	&	1.00	\\
	50	&	6	&	4	&	2.26	&	4.08	&	3.46	&	250	&	6	&	4	&	2.70	&	2.28	&	2.07	\\
	50	&	6	&	5	&	3.96	&	5.30	&	5.63	&	250	&	6	&	5	&	3.54	&	3.05	&	2.29	\\
	50	&	11	&	3	&	0.57	&	0.99	&	0.60	&	250	&	11	&	3	&	0.27	&	0.59	&	0.80	\\
	50	&	11	&	4	&	$1.22^{\blue{2}}$	&	$2.43^{\blue{2}}$	&	$3.58^{\blue{2}}$	&	250	&	11	&	4	&	$0.82^{\blue{5}}$	&	$0.66^{\blue{5}}$	&	$1.14^{\blue{5}}$	\\
	50	&	11	&	5	&	$2.12^{\blue{5}}$	&	$3.34^{\blue{5}}$	&	$5.79^{\blue{5}}$	&	250	&	11	&	5	&	$1.52^{\blue{5}}$	&	$2.66^{\blue{5}}$	&	$2.03^{\blue{5}}$	\\
	50	&	16	&	3	&	$0.36^{\blue{5}}$	&	$0.70^{\blue{5}}$	&	$0.85^{\blue{5}}$	&	250	&	16	&	3	&	$0.28^{\blue{5}}$	&	$0.44^{\blue{5}}$	&	$0.71^{\blue{5}}$	\\
	50	&	16	&	4	&	$0.94^{\blue{5}}$	&	$1.63^{\blue{5}}$	&	$2.31^{\blue{5}}$	&	250	&	16	&	4	&	$0.55^{\blue{5}}$	&	$0.78^{\blue{5}}$	&	$1.82^{\blue{5}}$	\\
	50	&	16	&	5	&	$1.84^{\blue{5}}$	&	$2.45^{\blue{5}}$	&	$4.25^{\blue{5}}$	&	250	&	16	&	5	&	$1.81^{\blue{5}}$	&	$2.19^{\blue{5}}$	&	$3.02^{\blue{5}}$	\\
	100	&	6	&	3	&	0.67	&	1.48	&	1.86	&	500	&	6	&	3	&	0.41	&	0.98	&	0.94	\\
	100	&	6	&	4	&	3.42	&	3.22	&	2.82	&	500	&	6	&	4	&	1.92	&	1.09	&	0.76	\\
	100	&	6	&	5	&	2.86	&	2.25	&	3.63	&	500	&	6	&	5	&	3.16	&	$2.02^{\blue{2}}$	&	$2.26^{\blue{4}}$	\\
	100	&	11	&	3	&	0.34	&	0.75	&	1.53	&	500	&	11	&	3	&	0.24	&	0.49	&	0.67	\\
	100	&	11	&	4	&	$1.67^{\blue{5}}$	&	$2.56^{\blue{5}}$	&	$2.68^{\blue{5}}$	&	500	&	11	&	4	&	$0.75^{\blue{5}}$	&	$0.88^{\blue{5}}$	&	$1.08^{\blue{5}}$	\\
	100	&	11	&	5	&	$2.25^{\blue{5}}$	&	$4.68^{\blue{5}}$	&	$1.76^{\blue{5}}$	&	500	&	11	&	5	&	$1.20^{\blue{5}}$	&	$2.11^{\blue{5}}$	&	$0.11^{\blue{5}}$	\\
	100	&	16	&	3	&	$0.30^{\blue{5}}$	&	$0.48^{\blue{5}}$	&	$1.09^{\blue{5}}$	&	500	&	16	&	3	&	$0.28^{\blue{5}}$	&	$0.34^{\blue{5}}$	&	$0.50^{\blue{5}}$	\\
	100	&	16	&	4	&	$0.86^{\blue{5}}$	&	$1.33^{\blue{5}}$	&	$3.73^{\blue{5}}$	&	500	&	16	&	4	&	$0.57^{\blue{5}}$	&	$0.64^{\blue{5}}$	&	$-0.98^{\blue{5}}$	\\
	100	&	16	&	5	&	$2.33^{\blue{5}}$	&	$4.14^{\blue{5}}$	&	$3.32^{\blue{5}}$	&	500	&	16	&	5	&	$0.97^{\blue{5}}$	&	$1.11^{\blue{5}}$	&	$-3.31^{\blue{5}}$	\\
 \hhline{|---|---|---|---|}
    \multicolumn{3}{l|}{Average}  & $1.61^{\blue{2.61}}$  & $2.40^{\blue{2.61}}$  & $2.82^{\blue{2.61}}$  &       &       &       & $1.19^{\blue{2.78}}$  & $1.29^{\blue{2.89}}$  & $0.94^{\blue{3}}$ \\
   \multicolumn{3}{l|}{Maximum} & $3.96^{\blue{5}}$  & $5.30^{\blue{5}}$  & $5.79^{\blue{5}}$  &       &       &       & $3.54^{\blue{5}}$  & $3.05^{\blue{5}}$  & $3.02^{\blue{5}}$ \\
    \end{tabular}%
  \label{tab:expATvar}%
\end{table}%

We additionally analyze the optimal policies obtained for the quantile optimization problem QMDP-D \revised{at different confidence levels}, the associated expected value problem \revised{(EV) and the mean value problem (MV)} in comparison to the optimal policies of distinct scenarios. Figure \ref{fig:policy} illustrates the behaviour of the optimal policies under different settings for a particular instance with five scenarios, an inventory capacity of \revised{50} units and at most \revised{four} vehicles, i.e., $|\mathcal{S}|=5$, \revised{$|\mathcal{H}|=6$, $|\mathcal{A}|=5$.} \revised{Figure \ref{fig:policy:b} demonstrates the savings in the quantile function value when the policy minimizing $\alpha$-quantile is used instead of the optimal policies of the expected value problem and the mean value problem at different $\alpha$ values. While the EV policy preserves a consistent gap with the optimal quantile value, the performance of the MV policy fluctuates significantly. Figure \ref{fig:policy:b} also illustrates the nonconvex nature of the quantile function. The performances of the quantile optimizing policy, EV policy and the MV policy converge as $\alpha$ gets closer to 0.80, whereas the performance differences grow for $\alpha$ values larger than 0.80. Additionally, Figure \ref{fig:policy:a} reports the weekly supply and demand rates in each scenario and Figure \ref{fig:policy:c} presents the optimal policies considering these scenarios under different settings.} \revised{In Figure \ref{fig:policy:c}, the dashed lines represent the optimal policies for each scenario independently.} The \revised{solid} line marked with \revised{diamonds} corresponds to the quantile-optimizing policy \revised{at $\alpha=1$}, where \revised{four} vehicles are dispatched whenever the available inventory level drops to \revised{10}, \revised{three} vehicles if the inventory level is in the interval $(10,20]$, and so on. \revised{On the other hand, the policy minimizing $0.60$-quantile, depicted with the solid line marked with squares, is the least conservative: three vehicles are dispatched whenever there is no stocked items, a single vehicle is used for any inventory level in the interval (0,20] and none otherwise.} It can be seen that the optimal policies for scenarios \revised{2, 4 and 5}, which have a \revised{total} probability of 0.60, are similar to each other, while scenarios \revised{1 and 3} also have similar optimal policies \revised{and balanced supply and demand rates}.
As expected, the optimal policy for the quantile optimization problem \revised{at $\alpha=1$} is more aligned with the scenarios \revised{2, 4, and 5}, while \revised{the optimal policy for the quantile optimization problem at $\alpha=0.60$ is closer to the optimal policies of scenarios 1 and 3}. \revised{The EV and the MV policies (depicted with solid lines marked with triangles and circles, respectively) are more moderate and stay in between the quantile-minimizing policies at $\alpha=0.60$ and $\alpha=1$.}

Furthermore, we observe, empirically, that the optimal policies for quantile-optimizing humanitarian inventory management problem are monotone policies, where the number of vehicles to be dispatched decreases as the inventory level increases, as illustrated in Figure \ref{fig:policy:c}. \revised{Note that the provable existence of a monotone optimal policy for a classical MDP is usually contingent upon certain properties of the cost and transition probability functions such as monotonicity and subadditivity (see, e.g., \citealp{puterman2014markov}). However, the nonconvexity of the VaR objective poses significant challenges for proving the existence of optimal monotone policies even when these properties are satisfied. Although it is difficult to make theoretical conclusions about the preservation of monotonicity over a nonconvex quantile  function even if there is a monotone optimal policy for each scenario considered independently, we believe that a monotone policy of ordering nondecreasing quantities of supply as the inventory level decreases is an intuitive (and interpretable) policy that may be appealing to the decision makers in a humanitarian context.} Using such information on the characteristics of the desired policy, it is possible to add the following constraint into our model to enforce a monotone policy structure  
\begin{equation}\label{cons:threshold}
    w_{ia} \leq \sum\limits_{a' \in \mathcal{A}:\ a'\leq a}  w_{i'a'},\quad a \in \mathcal{A},\ i,i' \in \mathcal{H}:\ i'> i.
\end{equation}
Henceforth, we refer to the problem QMDP-D with additional monotonicity constraint \eqref{cons:threshold} as QMDP-M.
Constraint \eqref{cons:threshold} ensures that if $n_a$ vehicles are dispatched at the inventory level $i$, then for any higher inventory level $i'>i$, the number of vehicles dispatched can be at most $n_a$ (assuming $n_a > n_a'$ for $a,a'\in \mathcal{A}$ such that $a>a'$, as given in the problem statement). Incorporating more information on the characteristics of the desired policy may also provide computational advantages as it reduces the solution space.

\begin{figure}

\begin{minipage}[c][.5\linewidth]{.5\linewidth}
\centering
\subfloat[]{\label{fig:policy:b}\includegraphics[width=8.5cm]{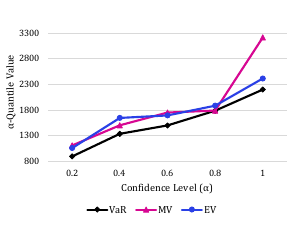}}
\end{minipage}%
\begin{minipage}[c][.5\linewidth]{.5\linewidth}
\centering
\subfloat[]{\begin{tabular}{c|c|c}
    \multicolumn{1}{c|}{Scenario} & \multicolumn{1}{c|}{Demand Rate} & \multicolumn{1}{c}{Supply Rate} \\ \hhline{|-|-|-|}
    1     & 5.22  & 5.67 \\
    2     & 11.71 & 6.60 \\
    3     & 5.07  & 5.11 \\
    4     & 12.19 & 3.78 \\
    5     & 7.88  & 3.13 \\ 
    \end{tabular}\label{fig:policy:a}}
\end{minipage}\par\medskip
\centering
\subfloat[]{\label{fig:policy:c}\includegraphics[width=13.5cm]{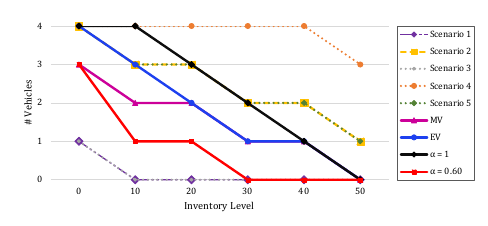}}

\caption{ Comparison of the (a) quantile values for policies minimizing the VaR, expected value and mean value, (b) supply and demand rates for each scenario and (c) policies obtained under different settings.  }
\label{fig:policy}
\end{figure}

\subsection{Comparison of Solution Approaches} \label{sec:CompSolTim}

In this section, we evaluate the computational performance of the proposed heuristic \revised{methods} and the MILP reformulations of QMDP-D in terms of solution times and optimality gaps achieved within the time limit of one hour. Our preliminary experiments using a branch-and-cut algorithm with mixing inequalities indicate poor computational performance because of the difficulty with balancing the strength of bounds obtained from the mixing inequalities with the computational effort required to solve the subproblems in each iteration of the algorithm. 
Hence, the implementation details and results of the branch-and-cut algorithm are omitted here to \revised{conserve} space; \revised{see \citet{merakli2018risk} for more information and our preliminary results on problem instances with fewer scenarios.}

We first examine the \revised{quality}  of the feasible solutions generated by the \revised{proposed} heuristic methods: the mean value (MV) problem, Algorithm \ref{alg2}, and the problem with monotone policies (QMDP-M). \revised{It bears emphasizing that  QMDP-M is considered as a heuristic method because it produces feasible (not necessarily optimal)  solutions to the original problem QMDP-D due to the additional  constraint that restricts the optimal policies to be monotone in the sense that the number of vehicles to be dispatched decreases as the inventory level increases.}  QMDP-M is formulated by adding constraint \eqref{cons:threshold} into the \revised{big-$M$} reformulation of QMDP-D where \revised{constraint \eqref{c8} is replaced by constraints \eqref{c9},} and \revised{the big-$M$ terms in \eqref{ccc3} and \eqref{c9} are set to $M = b_u-b_l$ and $M_{is} = \bar{v}_i^s-\underline{v}_i^s$ for all $i\in \mathcal{H},\ s \in\mathcal{S}$}. Moreover, we add constraints \eqref{cons:lb} \revised{and \eqref{cons:vfBnds},} and apply scenario elimination using the best feasible solution as an upper bound as described in Section \ref{sec:prepro}.
Column ``Time''  of Table \ref{tab:heur} reports the computation time \revised{in seconds} and \revised{column ``\%Diff''  reports the percentage difference with respect to the best solution of QMDP-D } for these three heuristic solution methods \revised{at confidence levels $\alpha = 0.95, 1.00$}. \revised{We additionally report the number of replications that cannot be solved to optimality due to time limit and terminate with a feasible solution using blue superscripts.} The \revised{percentage difference} for a policy with \revised{quantile} function value $obj$ is computed as $100 \times \frac{(obj-obj^*)}{obj}$, where $obj^*$ refers to the \revised{best} objective function value of the quantile minimization problem \revised{obtained within the time limit}. \revised{The results show that the MV problem and Algorithm \ref{alg2} produce feasible policies of similar quality with respect to the average percentage differences. For $\alpha=1.00$, the average percentage difference for Algorithm \ref{alg2} is 1.65\%, while the solutions provided by MV problem are on average within 2.26\% of the best deterministic policy. The maximum differences for both algorithms are 14.40\%. Similarly, for $\alpha=0.95$, the average percentage difference is 3.77\% for the MV solution and 3.90\% for Algorithm \ref{alg2}. However, the MV problem produces feasible policies in at most 15.09 seconds while Algorithm \ref{alg2} may take more than one hour for large instances.} Note that the performance of Algorithm \ref{alg2} could be potentially improved by performing a local search on the scenario decision vector $\vz$. \revised{On the other hand, due to the reduced search space of monotone policies, the heuristic  QMDP-M terminates relatively quickly (in 47.23 and 635.51 seconds for $\alpha=1.00$ and 0.95, respectively), and the monotone policy it generates turns out to be at least as good as the best solution of the original problem for almost all of our instances.}

\begin{table}[h]
  \centering
  \caption{Evaluation of the heuristic algorithms in terms of solution times and optimality gaps}
  \resizebox{16cm}{!}{%
    \begin{tabular}{ccc|rr|rr|rr|rr|rr|rr}
    \multicolumn{3}{r}{}  & \multicolumn{6}{|c}{$\alpha$ = 1.00} & \multicolumn{6}{|c}{$\alpha$  = 0.95} \\ \hhline{|---|--|--|--|--|--|--|}
    \multicolumn{3}{c}{Instance} & \multicolumn{2}{|c|}{MV} & \multicolumn{2}{c|}{Algorithm 1} & \multicolumn{2}{c|}{QMDP-M} & \multicolumn{2}{c|}{MV} & \multicolumn{2}{c|}{Algorithm 1} & \multicolumn{2}{c}{QMDP-M} \\ \hhline{|---|--|--|--|--|--|--|}
   $\mathcal{|S|}$    & $\mathcal{|H|}$    & $\mathcal{|A|}$     & \multicolumn{1}{r}{Time} & \multicolumn{1}{r}{\%Diff} & \multicolumn{1}{|r}{Time} & \multicolumn{1}{r}{\%Diff} & \multicolumn{1}{|r}{Time} & \multicolumn{1}{r}{\%Diff} & \multicolumn{1}{|r}{Time} & \multicolumn{1}{r}{\%Diff} & \multicolumn{1}{|r}{Time} & \multicolumn{1}{r}{\%Diff} & \multicolumn{1}{|r}{Time} & \multicolumn{1}{r}{\%Diff} \\ \hhline{|---|--|--|--|--|--|--|}
50	&	6	&	3	&	0.30	&	0.91	&	13.27	&	0.46	&	0.42	&	0.00	&	0.35	&	4.29	&	15.17	&	0.79	&	0.51	&	0.00	\\
50	&	6	&	4	&	0.35	&	7.66	&	17.46	&	3.72	&	0.61	&	0.00	&	0.40	&	9.87	&	20.18	&	4.13	&	1.37	&	0.00	\\
50	&	6	&	5	&	0.41	&	13.07	&	22.17	&	9.61	&	0.85	&	0.00	&	0.46	&	22.94	&	25.19	&	20.47	&	1.89	&	0.00	\\
50	&	11	&	3	&	0.77	&	0.41	&	36.71	&	0.06	&	1.31	&	0.00	&	0.90	&	0.97	&	42.28	&	0.22	&	1.79	&	0.00	\\
50	&	11	&	4	&	1.04	&	1.43	&	49.44	&	0.41	&	2.42	&	0.00	&	1.05	&	3.27	&	55.41	&	0.92	&	2.90	&	0.00	\\
50	&	11	&	5	&	1.11	&	2.34	&	63.66	&	0.96	&	2.38	&	0.00	&	1.21	&	5.97	&	69.71	&	4.14	&	9.45	&	0.00	\\
50	&	16	&	3	&	1.51	&	0.25	&	72.66	&	0.04	&	2.18	&	0.00	&	1.69	&	0.84	&	82.05	&	0.15	&	3.81	&	0.00	\\
50	&	16	&	4	&	1.75	&	1.04	&	95.09	&	0.16	&	4.47	&	0.00	&	1.98	&	1.78	&	108.25	&	0.34	&	7.19	&	0.00	\\
50	&	16	&	5	&	2.10	&	2.00	&	125.16	&	0.57	&	7.12	&	0.00	&	2.34	&	3.02	&	137.1	&	1.73	&	28.40	&	0.00	\\
100	&	6	&	3	&	0.42	&	0.43	&	26.85	&	0.16	&	0.64	&	0.00	&	0.46	&	0.98	&	32.32	&	0.98	&	1.24	&	0.00	\\
100	&	6	&	4	&	0.48	&	3.93	&	35.90	&	3.93	&	1.28	&	0.00	&	0.52	&	4.72	&	43.14	&	4.72	&	2.61	&	0.00	\\
100	&	6	&	5	&	0.53	&	14.40	&	44.78	&	14.40	&	2.10	&	0.00	&	0.58	&	15.93	&	53.82	&	22.95	&	10.55	&	0.00	\\
100	&	11	&	3	&	1.08	&	0.20	&	75.39	&	0.03	&	1.86	&	0.00	&	1.44	&	0.38	&	86.3	&	0.12	&	5.01	&	0.00	\\
100	&	11	&	4	&	1.25	&	1.72	&	101.98	&	0.47	&	4.03	&	0.00	&	1.36	&	2.60	&	115.39	&	1.37	&	10.68	&	0.00	\\
100	&	11	&	5	&	1.42	&	2.84	&	128.15	&	1.42	&	8.61	&	0.00	&	1.54	&	7.10	&	146.25	&	4.67	&	56.22	&	0.00	\\
100	&	16	&	3	&	2.10	&	0.17	&	151.24	&	0.02	&	4.14	&	0.00	&	2.37	&	0.23	&	175.87	&	0.06	&	9.60	&	0.00	\\
100	&	16	&	4	&	2.43	&	1.01	&	203.29	&	0.15	&	10.21	&	0.00	&	2.72	&	1.72	&	235.28	&	0.74	&	28.98	&	0.00	\\
100	&	16	&	5	&	2.72	&	2.52	&	252.33	&	0.82	&	19.17	&	0.00	&	2.93	&	6.05	&	274.84	&	2.19	&	439.71	&	0.00	\\
250	&	6	&	3	&	0.76	&	0.32	&	68.77	&	0.00	&	0.95	&	0.00	&	0.83	&	0.96	&	94.41	&	0.86	&	4.28	&	0.00	\\
250	&	6	&	4	&	0.82	&	3.42	&	90.81	&	3.42	&	3.17	&	0.00	&	0.88	&	3.41	&	127.28	&	3.41	&	10.97	&	0.00	\\
250	&	6	&	5	&	0.88	&	11.42	&	113.84	&	11.42	&	4.53	&	0.00	&	0.94	&	15.81	&	156.05	&	22.77	&	36.51	&	0.00	\\
250	&	11	&	3	&	2.01	&	0.15	&	193.21	&	0.01	&	5.34	&	0.00	&	2.07	&	0.45	&	241.07	&	0.18	&	16.07	&	0.00	\\
250	&	11	&	4	&	2.16	&	1.03	&	257.62	&	0.32	&	11.78	&	0.00	&	2.24	&	1.28	&	321.58	&	1.16	&	44.86	&	0.00	\\
250	&	11	&	5	&	2.30	&	1.70	&	315.43	&	1.21	&	20.72	&	0.00	&	2.46	&	4.00	&	397.67	&	4.73	&	205.50	&	0.00	\\
250	&	16	&	3	&	3.84	&	0.16	&	374.71	&	0.02	&	10.20	&	0.00	&	4.17	&	0.34	&	427.55	&	0.09	&	34.22	&	0.00	\\
250	&	16	&	4	&	4.34	&	0.52	&	508.76	&	0.17	&	31.86	&	0.00	&	4.22	&	0.99	&	580.6	&	1.12	&	105.47	&	0.00	\\
250	&	16	&	5	&	4.64	&	1.96	&	630.10	&	0.97	&	73.32	&	0.00	&	4.53	&	2.62	&	718.95	&	2.29	&	1176.84	&	0.00	\\
500	&	6	&	3	&	1.34	&	0.31	&	138.24	&	0.06	&	4.20	&	0.00	&	1.51	&	0.65	&	245.97	&	0.65	&	23.23	&	0.00	\\
500	&	6	&	4	&	1.41	&	2.87	&	187.98	&	2.87	&	8.06	&	0.00	&	1.60	&	2.72	&	330.05	&	2.72	&	114.54	&	0.00	\\
500	&	6	&	5	&	1.48	&	7.12	&	234.11	&	7.12	&	8.67	&	0.00	&	1.65	&	14.78	&	413.66	&	29.01	&	1413.45	&	0.00	\\
500	&	11	&	3	&	3.56	&	0.16	&	395.40	&	0.02	&	12.16	&	0.00	&	3.85	&	0.21	&	582.49	&	0.11	&	69.80	&	0.00	\\
500	&	11	&	4	&	3.81	&	1.03	&	542.51	&	0.35	&	24.67	&	0.00	&	3.89	&	1.72	&	741.39	&	1.72	&	233.66	&	0.00	\\
500	&	11	&	5	&	3.78	&	1.43	&	647.11	&	0.88	&	47.36	&	0.00	&	4.33	&	3.74	&	1028.56	&	3.82	&	$3221.29^{\blue{4}}$	&	0.00	\\
500	&	16	&	3	&	6.82	&	0.16	&	768.25	&	0.02	&	31.17	&	0.00	&	7.55	&	0.21	&	1092.92	&	0.07	&	255.53	&	0.00	\\
500	&	16	&	4	&	7.20	&	0.49	&	1046.28	&	0.15	&	75.58	&	0.00	&	7.83	&	1.13	&	1448.83	&	0.95	&	881.26	&	0.00	\\
500	&	16	&	5	&	7.58	&	0.91	&	1277.61	&	0.52	&	238.15	&	0.00	&	7.89	&	1.51	&	1723.18	&	1.65	&	$3046.32^{\blue{4}}$	&	0.00	\\
1000	&	6	&	3	&	2.20	&	0.26	&	243.07	&	0.22	&	8.32	&	0.00	&	2.80	&	0.45	&	707.36	&	0.71	&	93.71	&	0.00	\\
1000	&	6	&	4	&	2.28	&	1.89	&	326.49	&	1.89	&	20.36	&	0.00	&	2.90	&	2.44	&	945.14	&	2.44	&	603.58	&	0.00	\\
1000	&	6	&	5	&	2.47	&	3.12	&	422.04	&	3.12	&	18.71	&	0.00	&	2.69	&	13.44	&	1109.15	&	20.43	&	2112.05	&	0.00	\\
1000	&	11	&	3	&	6.40	&	0.14	&	768.36	&	0.06	&	35.94	&	0.00	&	7.42	&	0.20	&	1643.97	&	0.20	&	306.78	&	0.00	\\
1000	&	11	&	4	&	7.18	&	0.93	&	1117.65	&	0.35	&	122.17	&	0.00	&	6.64	&	1.17	&	1950.06	&	1.17	&	$2586.68^{\blue{1}}$	&	0.00	\\
1000	&	11	&	5	&	7.17	&	1.31	&	1371.73	&	0.69	&	168.70	&	0.00	&	6.97	&	2.45	&	2455.05	&	2.63	&	$3600.56^{\blue{5}}$	&	0.00	\\
1000	&	16	&	3	&	12.68	&	0.15	&	1550.63	&	0.04	&	218.16	&	0.00	&	11.08	&	0.26	&	2376.31	&	0.09	&	710.92	&	0.00	\\
1000	&	16	&	4	&	13.37	&	1.06	&	2080.92	&	0.31	&	378.42	&	0.00	&	13.85	&	0.13	&	$3607.18^{\blue{1}}$	&	0.24	&	$3467.09^{\blue{4}}$	&	0.20	\\
1000	&	16	&	5	&	15.09	&	1.36	&	2876.66	&	0.59	&	469.08	&	0.00	&	12.61	&	-0.07	&	$3912.40^{\blue{2}}$	&	-0.45	&	$3601.10^{\blue{5}}$	&	0.00	\\
 \hhline{|---|--|--|--|--|--|--|}
    \multicolumn{3}{l|}{Average}  & 3.32  & 2.26  & 445.86 & 1.65  & 47.23 & 0.00  & 3.42  & 3.77  & $691.05^{\blue{0.07}}$ & 3.90  & $635.51^{\blue{0.51}}$ & 0.00 \\
     \multicolumn{3}{l|}{Maximum}  & 15.09 & 14.40 & 2876.66 & 14.40 & 469.08 & 0.00  & 13.85 & 22.94 & $3912.40^{\blue{2}}$ & 29.01 & $3601.10^{\blue{5}}$ & 0.20 \\
    \end{tabular}%
    }
  \label{tab:heur}%
\end{table}%


Next we evaluate the computational performance of the proposed MILP reformulations of QMDP-D \revised{at confidence level $\alpha=0.95$} under the settings detailed below. Note that these MILP reformulations provide an exact representation of problem QMDP-D.
\begin{itemize}
    \item[-] \revised{\emph{BM -- Basic}: It corresponds to formulation QMDP-D where constraint \eqref{c8} is replaced by constraint \eqref{c9} with additional big-$M$ terms. All big-$M$ terms are fixed as $10^6$.}
    \item[-] \emph{BM}: \revised{In addition to the setting in \emph{BM -- Basic},} the big-$M$ term in \eqref{ccc3} is set to \revised{$M = b_u-b_l$} and the big-$M$ terms $M_{is} = \bar{v}_i^s-\underline{v}_i^s, \ i\in \mathcal{H},s\in\mathcal{S}$ in constraint \eqref{c9}. Moreover, constraints \eqref{cons:lb} \revised{and \eqref{cons:vfBnds} are} added and the scenario elimination procedure described in Section \ref{sec:prepro} is applied using the optimal monotone policy solution of QMDP-M as an upper bound. Gurobi solver is provided with the optimal monotone policy solution as an initial feasible solution. 
    \item[-] \revised{\emph{MC -- Basic}:  It corresponds to the McCormick reformulation of QMDP-D with constraints \eqref{form:mc1} and \eqref{form:mc2} replacing \eqref{c8}. All big-$M$ terms are fixed as $10^6$ and lower bounds are set to zero.}
   \item[-] \emph{MC}: The big-$M$ term in \eqref{ccc3} is set to \revised{$M = b_u-b_l$ in formulation \emph{MC -- Basic}}. \revised{Additionally, the lower and upper bounds on variable $v_i^s$ in \eqref{form:mc1} are $\ell_j^s=\underline{v}_i^s$ and $u_j^s=\bar{v}_i^s$ for $i\in \mathcal{H},s\in\mathcal{S}$, respectively.} The optimal monotone policy solution  of QMDP-M is embedded into the solver as an initial feasible solution. Constraint \eqref{cons:lb} is added and scenario elimination is applied using the optimal monotone policy solution as an upper bound as described in Section \ref{sec:prepro}.
\end{itemize}


In Table \ref{tab:compDEF}, we report the total solution time in seconds (``Time (s)''), the best optimality gap achieved within the time limit (``Gap (\%)''), and the number of nodes in the branch-and-bound tree (``Nodes'') for the MILP reformulations detailed above. The optimality gap values are computed as $100\times\frac{ub-lb}{ub}$, where $ub$ and $lb$ correspond to the best upper and lower bounds on the optimal objective function value achieved at termination, respectively. The reported solution times do not include the time to obtain bounds and initial solutions. Each row corresponds to the average of \revised{five} replications and \revised{each blue (resp., red) superscript in columns ``Time (s)'' indicates the number of replications that exceeded the time limit with (resp., without) a feasible solution.} 
The results in Table \ref{tab:compDEF} show that \revised{the MILP models with additional big-$M$ terms (\emph{BM} and \emph{BM -- Basic}) outperform the corresponding MILP models with McCormick reformulation (\emph{MC} and \emph{MC -- Basic}). Out of 225 instances \emph{BM -- Basic} can solve 60 instances to optimality within the time limit, whereas \emph{MC -- Basic} can achieve optimality within the time limit for only 11 instances and fails to obtain a feasible solution for six instances. Our results also demonstrate the effectiveness of preprocessing methods for both formulations. The solution times are reduced by 48.79\% and 24.87\% for \emph{BM} and \emph{MC}, respectively. Out of 225, the number of instances that can be solved to optimality increases from 60 instances to 146 for \emph{BM} and from 11 instances to 76 for \emph{MC}. Similarly, the percentage optimality gaps and the number of nodes in the branch-and-bound tree usually decrease after the preprocessing methods are applied. Overall, formulation \emph{BM} with preprocessing demonstrates the best performance with respect to both the solution times and the number of instances that can be solved to optimality.} 


\begin{landscape}

\begin{table}[htbp]
  \centering
  \caption{Comparison of MILP reformulations of QMDP-D}
  \resizebox{21cm}{!}{%
    \begin{tabular}{ccc|rrr|rrr|rrr|rrr}
    \multicolumn{3}{c|}{Instance}      & \multicolumn{3}{|c}{\emph{BM}}           & \multicolumn{3}{|c}{\emph{BM -- basic}}     & \multicolumn{3}{|c}{\emph{MC}}   & \multicolumn{3}{|c}{\emph{MC -- basic}}     \\ \hhline{|---|---|---|---|---|}
    $\mathcal{|S|}$     & $\mathcal{|H|}$     & $\mathcal{|A|}$      & Time (s) & Gap (\%) & Nodes  &  Time (s) & Gap (\%) & Nodes & Time (s) & Gap (\%) & Nodes    & Time (s) & Gap (\%) & Nodes  \\ \hhline{|---|---|---|---|---|}
50	&	6	&	3	&	0.53	&	0.00	&	74.0	&	3.71	&	0.00	&	1576.4	&	8.81	&	0.00	&	94.40	&	68.97	&	0.00	&	2524.0	\\
50	&	6	&	4	&	1.54	&	0.00	&	477.6	&	123.76	&	0.00	&	9177.4	&	39.69	&	0.00	&	889.80	&	1284.51	&	0.00	&	17011.8	\\
50	&	6	&	5	&	6.96	&	0.00	&	2702.2	&	188.00	&	0.00	&	15325.0	&	340.62	&	0.00	&	7263.60	&	$3569.76^{\blue{4}}$	&	74.79	&	28518.6	\\
50	&	11	&	3	&	5.01	&	0.00	&	1115.4	&	595.36	&	0.00	&	39926.8	&	151.53	&	0.00	&	558.80	&	$3600.01^{\blue{5}}$	&	94.17	&	9179.4	\\
50	&	11	&	4	&	48.97	&	0.00	&	9489.2	&	$3482.07^{\blue{4}}$	&	62.91	&	260029.0	&	$1605.48^{\blue{1}}$	&	1.09	&	4852.20	&	$3600.01^{\blue{5}}$	&	98.60	&	5664.2	\\
50	&	11	&	5	&	1371.54	&	0.00	&	180041.6	&	$3600.00^{\blue{5}}$	&	90.91	&	134324.6	&	$2930.94^{\blue{4}}$	&	6.19	&	9891.00	&	$3600.01^{\blue{5}}$	&	98.52	&	5257.2	\\
50	&	16	&	3	&	48.41	&	0.00	&	4955.2	&	$3600.00^{\blue{5}}$	&	47.15	&	116557.8	&	$996.60^{\blue{1}}$	&	0.15	&	1100.00	&	$3600.06^{\blue{5}}$	&	99.16	&	1449.0	\\
50	&	16	&	4	&	$773.00^{\blue{1}}$	&	0.26	&	30838.0	&	$3600.00^{\blue{5}}$	&	75.61	&	74818.2	&	$2264.10^{\blue{3}}$	&	0.55	&	1815.20	&	$3600.04^{\blue{5}}$	&	99.17	&	1281.2	\\
50	&	16	&	5	&	$2883.24^{\blue{4}}$	&	2.72	&	66421.6	&	$3600.00^{\blue{5}}$	&	82.87	&	23468.2	&	$3077.71^{\blue{4}}$	&	3.03	&	2822.00	&	$3600.04^{\blue{5}}$	&	99.16	&	1041.4	\\
100	&	6	&	3	&	1.11	&	0.00	&	60.0	&	14.21	&	0.00	&	2597.4	&	22.18	&	0.00	&	154.80	&	$2260.04^{\blue{3}}$	&	50.70	&	9626.6	\\
100	&	6	&	4	&	6.76	&	0.00	&	1284.8	&	348.74	&	0.00	&	13624.4	&	338.6	&	0.00	&	3089.20	&	$3600.04^{\blue{5}}$	&	98.37	&	8594.2	\\
100	&	6	&	5	&	109.59	&	0.00	&	9182.8	&	949.59	&	0.00	&	32120.0	&	$3300.32^{\blue{4}}$	&	11.39	&	11485.00	&	$3600.01^{\blue{5}}$	&	98.60	&	6170.4	\\
100	&	11	&	3	&	1.75	&	0.00	&	23.2	&	$2554.87^{\blue{1}}$	&	13.88	&	68821.4	&	279.21	&	0.00	&	361.20	&	$3600.03^{\blue{5}}$	&	99.00	&	2259.2	\\
100	&	11	&	4	&	331.41	&	0.00	&	14263.8	&	$3600.00^{\blue{5}}$	&	91.92	&	40103.6	&	$3102.97^{\blue{4}}$	&	2.00	&	4304.20	&	$3600.03^{\blue{5}}$	&	98.95	&	1000.4	\\
100	&	11	&	5	&	$3483.97^{\blue{4}}$	&	7.61	&	153772.0	&	$3600.00^{\blue{5}}$	&	97.98	&	30367.0	&	$3600.02^{\blue{5}}$	&	10.44	&	3403.60	&	$3600.04^{\blue{5}}$	&	99.07	&	720.0	\\
100	&	16	&	3	&	4.07	&	0.00	&	7.0	&	$3600.02^{\blue{5}}$	&	69.47	&	21071.6	&	263.18	&	0.00	&	33.00	&	$3600.05^{\blue{5}}$	&	99.61	&	292.8	\\
100	&	16	&	4	&	$1762.18^{\blue{2}}$	&	1.04	&	26697.6	&	$3600.03^{\blue{5}}$	&	89.07	&	6343.2	&	$3600.02^{\blue{5}}$	&	5.03	&	1107.20	&	$3600.04^{\blue{5}}$	&	99.49	&	261.0	\\
100	&	16	&	5	&	$3600.01^{\blue{5}}$	&	5.07	&	38582.4	&	$3600.12^{\blue{5}}$	&	98.09	&	9030.0	&	$3600.05^{\blue{5}}$	&	10.41	&	1014.60	&	$3600.08^{\blue{5}}$	&	99.30	&	309.4	\\
250	&	6	&	3	&	3.40	&	0.00	&	97.6	&	307.79	&	0.00	&	5921.6	&	88.21	&	0.00	&	142.60	&	$3600.03^{\blue{5}}$	&	98.91	&	3180.2	\\
250	&	6	&	4	&	26.24	&	0.00	&	2023.4	&	1692.88	&	0.00	&	16706.6	&	$1890.50^{\blue{1}}$	&	0.91	&	3756.60	&	$3600.03^{\blue{5}}$	&	99.14	&	1493.8	\\
250	&	6	&	5	&	317.29	&	0.00	&	8098.8	&	$3233.03^{\blue{2}}$	&	34.70	&	38015.6	&	$3600.11^{\blue{5}}$	&	10.81	&	2443.00	&	$3600.05^{\blue{5}}$	&	99.13	&	1046.4	\\
250	&	11	&	3	&	13.97	&	0.00	&	154.8	&	$3600.00^{\blue{5}}$	&	93.90	&	17657.2	&	$1108.67^{\blue{1}}$	&	0.10	&	312.80	&	$3600.55^{\blue{5}}$	&	99.48	&	307.0	\\
250	&	11	&	4	&	$1558.46^{\blue{1}}$	&	0.49	&	13295.4	&	$3600.04^{\blue{5}}$	&	99.17	&	8746.2	&	$3600.16^{\blue{5}}$	&	7.79	&	698.80	&	$3600.06^{\blue{5}}$	&	99.47	&	238.4	\\
250	&	11	&	5	&	$3600.03^{\blue{5}}$	&	5.19	&	28274.2	&	$3600.06^{\blue{5}}$	&	99.60	&	2696.2	&	$3600.10^{\blue{5}}$	&	10.92	&	334.00	&	$3600.11^{\blue{5}}$	&	99.48	&	125.4	\\
250	&	16	&	3	&	42.39	&	0.01	&	602.0	&	$3600.02^{\blue{5}}$	&	97.76	&	4886.2	&	$3117.60^{\blue{3}}$	&	0.06	&	316.00	&	$3600.12^{\blue{5}}$	&	99.71	&	81.2	\\
250	&	16	&	4	&	$3035.12^{\blue{4}}$	&	1.42	&	12251.6	&	$3600.05^{\blue{5}}$	&	99.67	&	1665.0	&	$3600.30^{\blue{5}}$	&	20.99	&	134.00	&	$3600.27^{\blue{5}}$	&	99.68	&	47.2	\\
250	&	16	&	5	&	$3600.01^{\blue{5}}$	&	5.34	&	10113.2	&	$3600.03^{\blue{5}}$	&	99.93	&	4998.0	&	$3600.32^{\blue{5}}$	&	15.47	&	37.80	&	$3600.26^{\blue{5}}$	&	99.59	&	43.0	\\
500	&	6	&	3	&	10.04	&	0.00	&	87.8	&	1219.99	&	0.00	&	8865.8	&	$2299.07^{\blue{1}}$	&	1.00	&	326.20	&	$3601.08^{\blue{5}}$	&	99.37	&	728.2	\\
500	&	6	&	4	&	228.39	&	0.00	&	3696.8	&	$3479.89^{\blue{4}}$	&	78.56	&	10500.6	&	$3600.41^{\blue{5}}$	&	41.51	&	0.80	&	$3600.97^{\blue{5}}$	&	99.40	&	486.8	\\
500	&	6	&	5	&	$3137.55^{\blue{2}}$	&	2.42	&	25889.2	&	$3600.01^{\blue{5}}$	&	99.27	&	8176.0	&	$3601.14^{\blue{5}}$	&	68.80	&	0.60	&	$3600.85^{\blue{5}}$	&	99.46	&	462.4	\\
500	&	11	&	3	&	30.83	&	0.00	&	116.2	&	$3600.01^{\blue{5}}$	&	99.16	&	5640.8	&	$2659.70^{\blue{2}}$	&	0.06	&	114.00	&	$3600.05^{\blue{5}}$	&	99.59	&	140.8	\\
500	&	11	&	4	&	$3077.46^{\blue{3}}$	&	0.74	&	7469.6	&	$3600.02^{\blue{5}}$	&	99.69	&	1658.4	&	$3600.25^{\blue{5}}$	&	19.43	&	1.00	&	$3600.10^{\blue{5}}$	&	99.56	&	72.4	\\
500	&	11	&	5	&	$3600.02^{\blue{5}}$	&	7.48	&	4394.4	&	$3600.27^{\blue{5}}$	&	99.75	&	1527.2	&	$3600.24^{\blue{5}}$	&	28.53	&	1.00	&	$3600.13^{\blue{5}}$	&	99.57	&	62.8	\\
500	&	16	&	3	&	264.96	&	0.00	&	1215.0	&	$3600.01^{\blue{5}}$	&	99.63	&	1867.8	&	$3600.14^{\blue{5}}$	&	0.27	&	0.00	&	$3600.26^{\blue{4},\red{1}}$	&	99.75	&	10.5	\\
500	&	16	&	4	&	$3600.02^{\blue{5}}$	&	1.36	&	2104.8	&	$3600.03^{\blue{5}}$	&	99.88	&	1170.6	&	$3600.14^{\blue{5}}$	&	37.67	&	0.20	&	$3600.14^{\blue{4},\red{1}}$	&	99.72	&	10.3	\\
500	&	16	&	5	&	$3600.04^{\blue{5}}$	&	6.72	&	1725.2	&	$3600.02^{\blue{5}}$	&	100.00	&	1702.8	&	$3600.16^{\blue{5}}$	&	81.72	&	0.00	&	$3600.64^{\blue{5}}$	&	99.77	&	6.0	\\
1000	&	6	&	3	&	74.47	&	0.00	&	638.0	&	$3472.34^{\blue{4}}$	&	78.97	&	6475.6	&	$3600.08^{\blue{5}}$	&	6.16	&	1.00	&	$3600.10^{\blue{5}}$	&	99.48	&	364.0	\\
1000	&	6	&	4	&	$1930.16^{\blue{1}}$	&	0.50	&	6128.6	&	$3600.07^{\blue{5}}$	&	99.37	&	4296.4	&	$3600.16^{\blue{5}}$	&	17.86	&	1.00	&	$3600.05^{\blue{5}}$	&	99.55	&	251.8	\\
1000	&	6	&	5	&	$3600.07^{\blue{5}}$	&	8.44	&	11428.0	&	$3600.39^{\blue{5}}$	&	99.44	&	3478.2	&	$3600.12^{\blue{5}}$	&	43.53	&	1.00	&	$3600.04^{\blue{5}}$	&	99.53	&	170.0	\\
1000	&	11	&	3	&	210.63	&	0.00	&	563.4	&	$3600.09^{\blue{5}}$	&	99.48	&	2491.0	&	$3600.18^{\blue{5}}$	&	14.97	&	0.00	&	$3600.60^{\blue{5}}$	&	99.75	&	1.0	\\
1000	&	11	&	4	&	$3600.03^{\blue{5}}$	&	3.07	&	1293.8	&	$3600.32^{\blue{5}}$	&	99.81	&	1444.0	&	$3600.18^{\blue{5}}$	&	81.88	&	0.00	&	$3600.39^{\blue{2},\red{3}}$	&	99.86	&	1.0	\\
1000	&	11	&	5	&	$3600.02^{\blue{5}}$	&	6.95	&	1353.0	&	$3600.43^{\blue{5}}$	&	99.89	&	1036.8	&	$3600.16^{\blue{5}}$	&	100.00	&	0.00	&	$3600.28^{\blue{5}}$	&	99.90	&	1.0	\\
1000	&	16	&	3	&	$2003.15^{\blue{2}}$	&	0.24	&	1155.4	&	$3600.23^{\blue{5}}$	&	99.82	&	977.0	&	$3600.21^{\blue{5}}$	&	28.95	&	0.00	&	$3600.43^{\blue{5}}$	&	99.82	&	1.0	\\
1000	&	16	&	4	&	$3600.09^{\blue{5}}$	&	3.58	&	461.2	&	$3600.04^{\blue{5}}$	&	99.97	&	809.0	&	$3600.41^{\blue{5}}$	&	85.56	&	0.00	&	$3600.62^{\blue{4},\red{1}}$	&	99.80	&	1.0	\\
1000	&	16	&	5	&	$3600.11^{\blue{5}}$	&	6.30	&	533.0	&	$3600.04^{\blue{5}}$	&	99.99	&	696.6	&	$3600.45^{\blue{5}}$	&	100.00	&	0.00	&	$3601.29^{\blue{5}}$	&	99.96	&	1.0	\\
\hhline{|---|---|---|---|---|}
    \multicolumn{3}{l|}{Average } &	$1475.67^{\blue{1.76}}$	&	1.71	&	15225.6	&	$2881.52^{\blue{3.67}}$	&	68.83	&	23630.9	&	$2584.25^{\blue{3.31}}$	&	19.45	&	1396.9	&	$3439.85^{\blue{4.57},\red{0.13}}$	&	93.22	&	2455.5	\\
    \multicolumn{3}{l|}{Maximum} &	$3600.11^{\blue{5}}$	&	8.44	&	180041.6	&	$3600.43^{\blue{5}}$	&	100.00	&	260029.0	&	$3601.14^{\blue{5}}$	&	100.00	&	11485.0	&	$3601.29^{\blue{5},\red{3}}$	&	99.96	&	28518.6
 \\
     \end{tabular}%
    }
  \label{tab:compDEF}%
\end{table}%
\end{landscape}

\section{Conclusions} \label{sec:Conc}

In this study, we investigate the risk associated with parameter uncertainty in MDPs. We formulate the problem with the objective of minimizing the VaR of the expected total discounted cost of an MDP at a prespecified confidence level $\alpha$ and explore characteristics of the optimal policies. Assuming a discrete representation of uncertainty, we provide MINLP and MILP formulations considering randomized and deterministic policies, and propose preprocessing methods and heuristic algorithms that can be applied for both cases. The proposed modeling approach and solution algorithms are tested on an inventory management problem in the long term humanitarian relief operations context \revised{and our results show that the proposed modeling approach may provide significant reductions in risk arising from parameter uncertainty compared with solving an MDP with average system parameters or solving an expected value problem as proposed in \cite{steimle2018multimodel}.}

\revised{In contrast to classical MDPs in which system parameters are assumed to be known, all optimal policies in an MDP with uncertain parameters minimizing the associated VaR may be randomized. The results on our instances of the humanitarian inventory management problem indicate that policy randomization may provide additional savings in the expected discounted total cost, but it also amplifies computational complexity of the problem due to additional nonlinearities. Hence, we recommend the use of deterministic policies especially in humanitarian settings, where implementation of randomized policies may raise ethical concerns. On the other hand, our mixed-integer nonlinear programming formulation QMDP-R can be used for application areas such as power management, infrastructure maintenance and rehabilitation and queuing systems, where randomized policies are effectively implemented.} 

\revised{Comparing the solution methods for the deterministic policy case, formulation $BM$ with additional big-$M$ terms outperforms the formulations based on McCormick envelopes. Our preprocessing procedures also provide significant improvements on the computational performance of both formulations. In addition, we provide heuristic methods and show that QMDP-M, enforcing monotonicity constraints on the optimal policy, performs best since it reduces solution times remarkably and provides feasible solutions that are also optimal for the original problem on our problem instances. }

\revised{It is worthy to note that, just as policy randomization, it may also be possible to gain from utilizing nonstationary policies in a setting where the uncertain parameters are learned over time. 
However, computing a nonstationary optimal policy that minimizes the risk arising from parameter uncertainty in an infinite-horizon setting is possible if the objective is  to optimize a dynamic risk measure. On the other hand, the risk measure of interest in our paper,  namely VaR,  is not a dynamic risk measure and hence it is not amenable to dynamic updates.  \cite{zhang2014branch} discuss the challenges in using chance constraints in a dynamic framework and show that solving static optimization problems at each period in a rolling horizon fashion may result in violations in the chance constraint representing a service level requirement of the performance of the system over the entire planning horizon. Moreover, \cite{steimle2018multimodel} demonstrate that learning the uncertain parameters over time significantly increases the computational complexity even in the case of the expected value criterion while providing low level of gains in the objective function. Hence, we believe that the use of dynamic risk measures for addressing parameter uncertainty in MDPs would be an interesting and nontrivial extension of this study. We  refer interested readers to a recent paper by \cite{dentcheva2018risk}, which provides a foundation for dynamic risk measures under distributional uncertainty. }

\ignore{
However, computing a nonstationary optimal policy that minimizes the risk arising from parameter uncertainty in an infinite-horizon setting requires the use of a dynamic risk measure. In this regard, the VaR is not a coherent risk measure and hence it does not necessarily yield decisions that are consistent over time, i.e., the optimal decision determined at period $t$ considering period $t+i$ under scenario $s$ may be sub-optimal with respect to the VaR criterion when scenario $s$ is realized at period $t+i$ for some $i>0$. \cite{zhang2014branch} discuss the challenges in extending chance constraints to a dynamic framework and additionally show that solving static optimization problems at each period in a rolling horizon fashion may result in violations in the chance constraint. Moreover, \cite{steimle2018multimodel} demonstrate that learning the uncertain parameters over time significantly increases the computational complexity even in the case of the expected value criterion while providing low level of gains in the objective function. Hence, we believe that the use of dynamic risk measures for addressing parameter uncertainty in MDPs would be an interesting and nontrivial extension of this study. We  refer interested readers to a recent paper by \cite{dentcheva2018risk}, which provides a foundation for dynamic risk measures under distributional uncertainty.  }

  \section*{Acknowledgment}
We thank the AE and the three anonymous referees whose comments improved the paper.

{\singlespace
\bibliographystyle{apa}
\bibliography{mybibfile}}

\end{document}